\documentclass[10pt, reqno]{amsart}
\usepackage{amscd, amssymb,latexsym,amsmath, amscd, amsmath, comment, hyperref}
\usepackage[all]{xy}
\usepackage{anysize}
\usepackage[sc]{mathpazo} 
\usepackage{hyperref}
\usepackage{bbm}
\usepackage{color}
\usepackage{dsfont}
\usepackage{mathtools}
\usepackage{cases}
\usepackage[numbers]{natbib}

\marginsize{2.0cm}{2.0cm}{2.0cm}{1.9cm}
\linespread{1}

\usepackage{mathtools}
\usepackage{dsfont}

\def\F{{\mathbb F}}
\def\OF{{\overline{\mathbb F}}_p}
\def\OFs{{\overline{\mathbb F}}_p^2}
\def\Gq{{G_q}}
\def\Gp{{G_p}}
\def\CL{{\chi_{_{\mathrm{LHS}}}}}
\def\CR{{\chi_{_{\mathrm{RHS}}}}}
\def\xtx{{X\otimes X}}
\def\xty{{X\otimes Y}}
\def\ytx{{Y\otimes X}}
\def\yty{{Y\otimes Y}}

\numberwithin{equation}{section}

\newtheorem{theorem}{Theorem}[section] 
\newtheorem{lemma}[theorem]{Lemma}
\newtheorem{corollary}[theorem]{Corollary}
\newtheorem{proposition}[theorem]{Proposition}
\theoremstyle{remark}
\newtheorem{remark}{Remark}

\newtheorem{definition}{Definition}

\begin{document}

        \title[Restriction of irreducible representations]{On the restriction of some irreducible mod $p$ representations}
	\author{Shubhanshi Gupta}
	
        \address{Department of Basic Sciences, IITRAM, Ahmedabad - 380026, India.}
	\email{shubhanshi.gupta.20pm@iitram.ac.in}

	\subjclass{Primary 20C20}
        \keywords{Modular representations, tensor product decomposition, Clebsch-Gordan problem, distinguished representations}
	\date{}

        \begin{abstract}
          For a prime $p,$ let $\F_q$ be a finite extension of $\F_p.$ The restriction of an irreducible mod $p$ representation of $\text{GL}_2(\F_q)$ to its subgroup $\text{GL}_2(\F_p)$ can be seen as a tensor product of irreducible representations of $\text{GL}_2(\F_p).$ In this paper, we study the restriction of some of these representations of $\text{GL}_2(\F_q)$ to $\text{GL}_2(\F_p),$ for $q=p^2$ and $p^3$ using elementary tools and give explicit socle filtration when $q=4.$ We prove that when $q=p^2,$ a special class of representations of $\text{GL}_2(\F_q)$ are distinguished by suitable characters of $\text{GL}_2(\F_p).$
	\end{abstract}
	\maketitle

\section{Introduction}\label{S1}

   For a representation $\rho$ of a group $G$ and its subgroup $H,$ it is interesting to know the structure of the restriction ${\rho|}_H.$ Depending on various factors, such as the groups involved and the underlying field of the representation space, the restriction of an irreducible representation may remain irreducible, become completely reducible, or become indecomposably reducible. In particular, for finite groups, representations over fields with characteristic zero are completely reducible due to Maschke's theorem. This is not the case in positive characteristic. In this article, our primary interest is in the restriction of an irreducible mod $p$ representation of $\text{GL}_2(\F_q)$ to $\text{GL}_2(\F_p),$ for a finite extension $\F_q$ of a finite field $\F_p.$ The irreducible representations of $\text{GL}_2(\F_q)$ are classified as $\bigotimes\limits_{i=1}^n\text{Sym}^{r_i}(\OFs)\otimes\mathrm{det}^k$ with $0\leq r_i<p$ and $0\leq k< q-1,$ where $q=p^n$ and $\text{Sym}^j(\OFs)$ denotes the $j^{th}$ symmetric power of the standard 2-dimensional representation. The action involves increasing Frobenius powers in each component of the tensor product and since $\alpha^{p^i}=\alpha$ for $\alpha\in\F_p,$ its restriction to $\text{GL}_2(\F_p)$ can be seen as the tensor product of representations of $\text{GL}_2(\F_p).$
   
In Section \ref{S3}, we analyze the restriction of irreducible representations of $\text{GL}_2(\F_{p^2})$ to $\text{GL}_2(\F_p).$ The decomposition when $0\leq r_1+r_2< p$ can be given by the Clebsch-Gordan decomposition (cf. 5.5(a) in \cite{DG}) which requires proving the exactness of the sequence 5.1 in \cite{DG}. Our approach is more computational yet simpler than this. In Section \ref{S3} of this paper, we derive the decomposition up to semisimplification using only the ordinary character theory, instead of involving the above exact sequence as a major tool. Then we use Lemma \ref{L1} to prove the actual decomposition. We also give an explicit structure of the restriction of the irreducible representations of $\text{GL}_2(\F_4)$ to $\text{GL}_2(\F_2)$ in Subsection \ref{SS1}.

Another concept of general interest associated with restriction is that of distinction. Such problems are not as elaborately studied in mod $p$ as in complex or $\ell\text{-modular}$ representation theory; see, for instance, articles like \cite{KC}, \cite{CLL} and \cite{VS} and the references therein. In Subsection \ref{SS2}, as a consequence of Theorem \ref{T2}, we give the necessary and sufficient conditions for the irreducible representations of $\mathrm{GL}_2(\F_{p^2})$ (when $0\leq r_1,r_2<p$ and $r_1+r_2<p$) to be $\text{GL}_2(\F_p)\text{-distinguished}.$ We also show that the character $\mathrm{det}^{r+k}$ appears in the socle as well as in the cosocle of the representation $\text{Sym}^{r}\otimes\text{Sym}^{r}\otimes\mathrm{det}^k$ of $\text{GL}_2(\F_p)$ for all $r,k\geq 0.$ Consequently, as representations of $\mathrm{GL}_2(\F_{p^2}),$ such representations are $(\text{GL}_2(\F_p),\mathrm{det}^{r+k})\text{-distinguished}$ (cf. Definition \ref{Def}).

Section \ref{S4} contains one of the main results of this paper. It deals with the restriction of irreducible representations of $\text{GL}_2(\F_{p^3})$ to $\text{GL}_2(\F_p)$ when $r_1+r_2+r_3<p$ as proved in Theorem \ref{T3}. Here, we not only give sufficient condition for the representation to be completely reducible but also explicitly find the irreducible components with multiplicities. These sufficient conditions are in compliance with the result of J.P. Serre in \cite{JPS} which proves that the tensor product of $m$ finite dimensional irreducible representations $\{V_i: 1\leq i\leq m\}$ of a group is semisimple whenever $\sum\limits_{i=1}^m(\text{dim }V_i-1)<p.$ This motivates us to look for appropriate generalizations for any finite extension $\F_q$ of $\F_p,$ which could potentially be our next work. The idea is to use the decomposition of the tensor product of two irreducible representations and the distributivity of tensor product over the direct sum. A similar result is proved for the restriction of irreducible representations of $\text{SL}_2(\F_{p^3})$ to $\text{SL}_2(\F_p)$ as a corollary.

	\section{Preliminary}\label{S2}
	We fix an algebraic closure ${\OF}$ of the finite field $\F_p.$ For a finite extension $\F_q$ of $\F_p,$ let $\Gp\coloneqq\mathrm{GL}_2(\F_p)$ and $\Gq\coloneqq\mathrm{GL}_2(\F_q).$
    
\subsection{Character theory}
	
	The character theory in the study of representations over fields of characteristic $p$ is inadequate. For instance, unlike in complex representation theory, the equality of the characters of two representations of a finite group does not guarantee isomorphic representations. However, a weaker result holds here for finite groups, which is formulated in the next theorem.

\begin{theorem}\label{T5}
		Let $\chi_1$ and $\chi_2$ be the ordinary characters of two representations $V_1$ and $V_2$ respectively over $\OF.$ If $\chi_1=\chi_2$ then for each composition factor $W$ occurring in $V_1$ or $V_2,$ the multiplicity of $W$ in $V_1$ is congruent modulo $p$ to the multiplicity of $W$ in $V_2.$
	\end{theorem}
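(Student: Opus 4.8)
The plan is to reduce the statement to a single linear-algebra fact: the trace functions of pairwise non-isomorphic irreducible $\OF G$-modules are linearly independent over $\OF$. First I would record the additivity of the (mod $p$) character along short exact sequences: if $0\to V'\to V\to V''\to 0$ is exact, then choosing, for each $g\in G$, a basis of $V$ adapted to the $g$-stable subspace $V'$ makes the matrix of $g$ on $V$ block upper-triangular, so $\chi_V(g)=\chi_{V'}(g)+\chi_{V''}(g)$. Iterating this down a composition series gives $\chi_{V}=\sum_W \overline{[V:W]}\,\chi_W$, the sum running over the isomorphism classes of irreducible $\OF G$-modules, where $\chi_W\colon G\to\OF$ is the trace function of $W$, $[V:W]\in\mathbb Z_{\ge 0}$ is the composition multiplicity of $W$ in $V$, and $\overline{(\,\cdot\,)}$ denotes reduction modulo $p$. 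Hence $\chi_1=\chi_2$ forces $\sum_W\big(\overline{[V_1:W]}-\overline{[V_2:W]}\big)\chi_W=0$ as a function $G\to\OF$, and once the $\chi_W$ are known to be $\OF$-linearly independent we conclude $[V_1:W]\equiv[V_2:W]\pmod p$ for every $W$, which is exactly the assertion (taking $[V_i:W]=0$ when $W$ occurs in neither module).

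For the linear independence I would pass to the group algebra. Since $G$ is an $\OF$-basis of $\OF G$, extending each function $G\to\OF$ by linearity identifies the space of such functions with $(\OF G)^{*}$; under this identification $\chi_W$ corresponds to the functional $\tau_W\colon \OF G\to\OF$, so the $\chi_W$ are independent as functions exactly when the $\tau_W$ are independent as functionals. Now $\tau_W$ is the composite of the structure map $\OF G\to\operatorname{End}_{\OF}(W)$ with the matrix trace, and since the Jacobson radical $J\coloneqq J(\OF G)$ annihilates every irreducible module, $\tau_W$ factors through $\OF G/J$. As $\OF$ is algebraically closed (hence a splitting field) and $G$ is finite, Wedderburn's theorem gives $\OF G/J\cong\prod_{i=1}^{m}\operatorname{End}_{\OF}(W_i)$ with $W_1,\dots,W_m$ a complete list of irreducible modules, the $i$-th projection being induced by the action on $W_i$. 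Under this isomorphism $\tau_{W_i}$ becomes $(A_1,\dots,A_m)\mapsto\operatorname{tr}(A_i)$; evaluating a relation $\sum_i c_i\tau_{W_i}=0$ (with $c_i\in\OF$) at a preimage of the tuple supported in the $i$-th factor by the matrix unit $E_{11}$ yields $c_i=0$. This proves the independence, and with it the theorem.

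The only real subtlety is the choice of route at the linear-independence step. It is tempting to compare the given characters with their characteristic-zero Brauer lifts, but the $\OF$-valued trace retains strictly less information than the Brauer character (for instance it only sees the $p$-regular part of each element), so that comparison would either lose the sharp "mod $p$" conclusion or force one to track $p$-adic valuations of the decomposition and Cartan matrices. Working directly inside $\OF G/J$ sidesteps this entirely and keeps the argument elementary; the remaining steps are routine.
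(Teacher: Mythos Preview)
Your argument is correct. The two ingredients---additivity of the trace along a composition series, and $\OF$-linear independence of the trace functions of the simple $\OF G$-modules---are exactly what is needed, and your Wedderburn argument for the latter is clean: the functionals $\tau_{W_i}$ factor through $\OF G/J\cong\prod_i\operatorname{End}_{\OF}(W_i)$, where they become the component-wise traces, and evaluating at a lift of $(0,\dots,E_{11},\dots,0)$ kills any dependence relation. Your remark about why going through Brauer characters would be a detour is also well taken.

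As for comparison with the paper: the paper does not prove this statement at all; it simply cites Theorem~7.2 of Benson's lecture notes~\cite{DB}. Your write-up therefore supplies what the paper omits. The route you take is the standard one and is, in fact, essentially the argument one finds in Benson (reduce to linear independence of the irreducible Brauer/trace characters, then read this off from the semisimple quotient of the group algebra), so there is no methodological divergence to discuss---you have just filled in the reference.
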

	
	\begin{proof}
		Refer to the proof of Theorem $7.2$ in \cite{DB}.
	\end{proof}
	\begin{corollary}\label{C}
		If $\{W_i: 1\leq i\leq k\}$ is a set of distinct composition factors of a representation $V$ such that characters of $V$ and $\bigoplus\limits_{i=1}^k W_i$ are equal, then	$V^{\mathrm{ss}}\cong\bigoplus\limits_{i=1}^k W_i$ if and only if $\text{dim}(V)=\sum\limits_{i=1}^k\text{dim}(W_i),$ where $V^{\textrm{ss}}$ denotes the semisimplification of $V.$
 
	\end{corollary}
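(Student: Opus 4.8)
The plan is to prove the two implications separately. The forward implication is immediate: if $V^{\mathrm{ss}}\cong\bigoplus_{i=1}^k W_i$, then since semisimplification preserves dimension, $\dim(V)=\dim(V^{\mathrm{ss}})=\sum_{i=1}^k\dim(W_i)$.

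For the converse I would argue as follows. Assume $\dim(V)=\sum_{i=1}^k\dim(W_i)$, and let $m_W\ge 1$ denote the multiplicity of a composition factor $W$ in $V$, so that $\dim(V)=\sum_W m_W\dim(W)$, the sum running over representatives of all composition factors of $V$. Since $V$ and $\bigoplus_{i=1}^k W_i$ have equal ordinary characters, Theorem \ref{T5} tells us that $m_W$ is congruent modulo $p$ to the multiplicity of $W$ in $\bigoplus_{i=1}^k W_i$; because the $W_i$ are pairwise distinct, this latter multiplicity equals $1$ when $W\cong W_i$ for some $i$ and $0$ otherwise. Hence $m_{W_i}\ge 1$ for every $i$, while any composition factor $W$ of $V$ not isomorphic to any $W_i$ would have to occur with multiplicity $m_W\ge p$.

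The concluding step is a dimension count: combining the above,
\[
\sum_{i=1}^k\dim(W_i)=\dim(V)=\sum_W m_W\dim(W)\ \ge\ \sum_{i=1}^k m_{W_i}\dim(W_i)\ \ge\ \sum_{i=1}^k\dim(W_i),
\]
so every inequality is an equality. Since $\dim(W_i)>0$, this forces $m_{W_i}=1$ for all $i$ and forbids any further composition factor, yielding $V^{\mathrm{ss}}\cong\bigoplus_{i=1}^k W_i$.

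The main point to watch — the only place a careless argument could slip — is that the hypothesis does not presuppose that $\{W_i\}$ exhausts the composition factors of $V$; it is precisely the mod-$p$ congruence of Theorem \ref{T5}, which forces any stray composition factor to appear at least $p$ times, that allows the exact dimension equality to rule out such factors and simultaneously pin each multiplicity $m_{W_i}$ down to $1$. Beyond this bookkeeping I anticipate no serious obstacle.
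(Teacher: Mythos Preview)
Your proof is correct and follows essentially the same route as the paper: both arguments reduce to observing that each $W_i$ occurs in $V^{\mathrm{ss}}$ with multiplicity at least one, after which a dimension comparison forces equality (the paper phrases this as an embedding $\bigoplus_i W_i\hookrightarrow V^{\mathrm{ss}}$ between spaces of equal dimension). Your remark that a stray composition factor would have multiplicity $\ge p$ is true but not actually needed in your own inequality chain, since mere nonnegativity of the dropped terms already suffices.
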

    
	\begin{proof}
		Without loss of generality, assume that $V$ is semisimple. The characters of $V$ and $\bigoplus\limits_{i=1}^k W_i$ being equal implies by Theorem \ref{T5} that each $W_i$ occurs in $V$ at least once i.e., $\bigoplus\limits_{i=1}^k W_i\hookrightarrow V.$ Thus, $\text{dim}(V)=\text{dim}\left(\bigoplus\limits_{i=1}^k W_i\right)$ if and only if  $V\cong\bigoplus\limits_{i=1}^k W_i.$
	\end{proof}

    \subsection{Restriction of irreducible representations of $\Gq$ to $\Gp$}
    Let $\F_q$ be a finite extension of $\F_p$ such that $q=p^n.$
	A complete list of the irreducible mod $p$ representations of $\Gq$ (cf. \cite{BL}) is given by 
    $$V_{\vec{r}}(k)= \text{Sym}^{r_1}(\OFs)\otimes\text{Sym}^{r_2}(\OFs)\otimes\cdots\otimes\text{Sym}^{r_n}(\OFs)\otimes \mathrm{det}^k,$$
	where $\Vec{r}=(r_1,r_2,\cdots, r_n)$ with $r_i\in \{0,1,\cdots, p-1\}$ and $k\in\{0,1,\cdots, q-2\}$ and where $\text{Sym}^{r_i}(\OFs)$ is the $r_i+1$ dimensional vector space of homogeneous polynomials over $\OF$ in two variables $x_i$ and $y_i$ of degree $r_i,$ that is, $\text{Sym}^{r_i}(\OFs)=\left\langle\left\{x_i^{r_i-j}y_i^j: 0\leq j\leq r_i \right\} \right\rangle_{\OF}.$\\
    The action of $\left(
	\begin{matrix}
		a & b\\
		c & d
	\end{matrix}
	\right) \in\Gq$ on $\bigotimes\limits_{i=1}^n P_i(x_i,y_i)\in V_{\vec{r}}(k)$ is defined as 

				$$\left(
		\begin{matrix}
			a & b\\
			c & d
		\end{matrix}
		\right)\cdot \left(\bigotimes\limits_{i=1}^n P_i(x_i,y_i) \right)= \bigotimes\limits_{i=1}^n\left(\left(
		\begin{matrix}
			a^{p^{i-1}} & b^{p^{i-1}}\\
			c^{p^{i-1}} & d^{p^{i-1}}
		\end{matrix}
		\right)\cdot P_i(x_i,y_i)\right)\otimes \mathrm{det}^k\left(
		\begin{matrix}
			a & b\\
			c & d
		\end{matrix}
		\right),$$ where $\left(
	\begin{matrix}
		a & b\\
		c & d
	\end{matrix}
	\right)\cdot P_i(x_i,y_i)=P_i
    \left(ax_i+cy_i,bx_i+dy_i\right).$
    
\begin{remark}
    We will use the following notations throughout: $$V_{\vec{r}}\coloneqq V_{\vec{r}}(0)\text{ and }V_r(k)\coloneqq \text{Sym}^{r}(\OFs)\otimes\mathrm{det}^k.$$ 
\end{remark}
   
Note that since $\alpha^{p^i}=\alpha$ for all $\alpha\in\F_p,$ the restriction of $V_{\vec{r}}(k)$ to $\Gp$ has the following action
$$\left(
		\begin{matrix}
			a & b\\
			c & d
		\end{matrix}
		\right)\cdot \left(\bigotimes\limits_{i=1}^n v_i\right)=\bigotimes\limits_{i=1}^n\left(\left(
		\begin{matrix}
			a & b\\
			c & d
		\end{matrix}
		\right)\cdot v_i\right)\otimes \mathrm{det}^{k}\left(
		\begin{matrix}
			a & b\\
			c & d
		\end{matrix}
		\right).$$
Thus, the restriction of irreducible representations of $\Gq$ to $\Gp$ becomes the usual tensor product of finitely many irreducible representations of $\Gp.$

	\section{Restriction when $\F_q$ is a quadratic extension of $\F_p$}\label{S3}

	For this section, let $\F_q$ be a quadratic extension of $\F_p.$ Here, the restriction of $V_{\vec{r}}(k)$ to $\Gp$  is equivalent to studying the representation $V_{r_1}\otimes V_{r_2}\otimes\mathrm{det}^k$ of $\Gp.$ For brevity, we assume that $k=0$ and then remark later the result for any $k.$ The tensor product of two irreducible representations is usually known as the Clebsch-Gordan problem \cite{FH}. The contemplated result is as follows.
	\begin{theorem}\label{T1}
		For $\vec{r}=(r_1,r_2)$ with $0\leq r_1, r_2< p$ and $r_1+r_2<p,$
		\begin{equation}\label{E1}
			\tag{I}
			\left({V_{\vec{r}}|}_{\Gp}\right)^{\mathrm{ss}}\cong\left(V_{r_1}\otimes V_{r_2}\right)^\mathrm{ss}\cong V_{r_1+r_2}\oplus V_{r_1+r_2-2}(1)\oplus\cdots\oplus V_{|r_1-r_2|}(\mathrm{min}\{r_1,r_2\}).
		\end{equation}
	\end{theorem}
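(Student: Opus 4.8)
The plan is to prove the semisimplification isomorphism in two stages, exactly matching the strategy outlined in the introduction: first identify the composition factors (with multiplicities mod $p$) via ordinary character theory, then upgrade to an actual isomorphism of semisimple modules using a dimension count, i.e. Corollary \ref{C}. Since $V_{\vec r}\big|_{\Gp}$ is literally the $\Gp$-module $V_{r_1}\otimes V_{r_2}$, the first isomorphism in \eqref{E1} is tautological and everything reduces to computing $\left(V_{r_1}\otimes V_{r_2}\right)^{\mathrm{ss}}$.

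First I would lift to characteristic zero at the level of Brauer characters. The ordinary character of $\text{Sym}^{r}(\OFs)$ as a function on semisimple (i.e.\ $p$-regular) elements of $\Gp$ is the familiar one: on a diagonalizable element with eigenvalues $\lambda,\mu$ it takes the value $\lambda^{r}+\lambda^{r-1}\mu+\cdots+\mu^{r}$, and on elements with eigenvalues in $\F_{p^2}\setminus\F_p$ the analogous expression. The classical Clebsch--Gordan identity for $\mathrm{SL}_2$-characters, $(\sum_{i=0}^{r_1}\lambda^{r_1-i}\mu^i)(\sum_{j=0}^{r_2}\lambda^{r_2-j}\mu^j)=\sum_{s=0}^{\min\{r_1,r_2\}}(\lambda\mu)^{s}\sum_{t=0}^{r_1+r_2-2s}\lambda^{r_1+r_2-2s-t}\mu^{t}$, is a polynomial identity and hence holds verbatim for the ordinary characters of these $\Gp$-representations, the factor $(\lambda\mu)^s$ being exactly $\det^s$. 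Therefore
\[
\chi_{V_{r_1}\otimes V_{r_2}}=\sum_{s=0}^{\min\{r_1,r_2\}}\chi_{V_{r_1+r_2-2s}(s)}
\]
as ordinary characters of $\Gp$. Now I invoke the hypothesis $r_1+r_2<p$: each exponent $r_1+r_2-2s$ lies in $\{0,1,\dots,p-1\}$, so each $V_{r_1+r_2-2s}(s)=\text{Sym}^{r_1+r_2-2s}(\OFs)\otimes\det^{s}$ is genuinely an irreducible $\Gp$-module in the standard list, and the twists $\det^s$ for $0\le s\le\min\{r_1,r_2\}<p$ are pairwise distinct, so the summands are pairwise non-isomorphic irreducibles.

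With the character identity in hand, Theorem \ref{T5} tells us that every $V_{r_1+r_2-2s}(s)$ occurs as a composition factor of $V_{r_1}\otimes V_{r_2}$ (its multiplicity is $\equiv 1 \not\equiv 0 \pmod p$, using $r_1+r_2<p$ so that these small multiplicities don't vanish mod $p$), and that no other irreducible occurs. Then Corollary \ref{C} applies with $\{W_i\}=\{V_{r_1+r_2-2s}(s):0\le s\le\min\{r_1,r_2\}\}$: the characters of $V_{r_1}\otimes V_{r_2}$ and $\bigoplus_i W_i$ agree, so it remains only to check the dimension equality $\dim(V_{r_1}\otimes V_{r_2})=\sum_i\dim W_i$, i.e.\ $(r_1+1)(r_2+1)=\sum_{s=0}^{\min\{r_1,r_2\}}(r_1+r_2-2s+1)$, which is an elementary arithmetic identity (both sides equal $(r_1+1)(r_2+1)$, as one sees by reversing the sum). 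This yields $\left(V_{r_1}\otimes V_{r_2}\right)^{\mathrm{ss}}\cong\bigoplus_{s=0}^{\min\{r_1,r_2\}}V_{r_1+r_2-2s}(s)$, which is exactly the right-hand side of \eqref{E1}.

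The only genuinely delicate point is making sure the two hypotheses $0\le r_1,r_2<p$ and $r_1+r_2<p$ are used in the right places and are really needed for this argument: $r_i<p$ is what makes $\text{Sym}^{r_i}(\OFs)$ irreducible over $\Gp$ to begin with, while $r_1+r_2<p$ is what guarantees (i) each symmetric power appearing on the right stays in the irreducible range, (ii) the multiplicities appearing in the character identity are all $1$ and in particular nonzero mod $p$ so Theorem \ref{T5} detects them, and (iii) no "collapsing" or extension phenomena force $V_{r_1}\otimes V_{r_2}$ to be strictly larger than the listed semisimple module — which is precisely why only the semisimplification, not the module itself, is claimed here (the genuine decomposition being deferred to Lemma \ref{L1} and Theorem \ref{T2}). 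I expect the bookkeeping of characters on the non-split semisimple conjugacy classes (eigenvalues in $\F_{p^2}\setminus\F_p$) to be the most error-prone step, but since the Clebsch--Gordan relation is a formal polynomial identity in the two eigenvalues it goes through uniformly on all $p$-regular classes without a separate case analysis.
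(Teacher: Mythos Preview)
Your proposal is correct and follows essentially the paper's approach: show that the ordinary characters of the two sides agree and then apply Corollary~\ref{C} together with the dimension count $(r_1+1)(r_2+1)=\sum_{s=0}^{\min\{r_1,r_2\}}(r_1+r_2-2s+1)$; the only difference is that the paper verifies the character equality conjugacy-class-by-conjugacy-class (Lemma~\ref{L1}, Corollaries~\ref{C1} and~\ref{C2}), whereas you invoke the Clebsch--Gordan relation as a formal polynomial identity in the eigenvalues, which handles all classes at once. One small slip: your parenthetical reference to ``Lemma~\ref{L1}'' for the genuine (non-semisimplified) decomposition should point to Lemma~\ref{L2} and Theorem~\ref{T2}; in the paper Lemma~\ref{L1} is precisely the diagonal-matrix character computation that your polynomial-identity argument replaces.
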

	We prove this through a series of results on characters of these representations. For that, recall that there are four types of conjugacy classes of $\Gq$, classified by the rational canonical form, namely, $\left(
			\begin{matrix}
				\lambda_1 & 0\\
				0 & \lambda_1
			\end{matrix}
			\right) , \left(
			\begin{matrix}
				\lambda_1 & 1\\
				0 & \lambda_1
			\end{matrix}
			\right)$ and $\left(
			\begin{matrix}
				\lambda_1 & 0\\
				0 & \lambda_2
			\end{matrix}
			\right)$ with $\lambda_1\neq\lambda_2$ and $\left(
			\begin{matrix}
				0 & -b\\
				1 & -a
			\end{matrix}
			\right),$ where $\lambda_1,\lambda_2,b\in\F_q^*$ and $a\in\F_q$. Without loss of generality, assume that $r_1\geq r_2.$ Henceforth, we will denote the character of $V_{r_1}\otimes V_{r_2}$ by $\CL$ and the character of $V_{r_1+r_2}\oplus V_{r_1+r_2-2}(1)\oplus\cdots\oplus V_{r_1-r_2}(r_2)$ by $\CR.$
	
	\begin{lemma}\label{L1}
		Let $g=\left(
		\begin{matrix}
			a & 0\\
			0 & d
		\end{matrix}
		\right)\in \Gp.$ Then, $\CL(g)=\CR(g).$
	\end{lemma}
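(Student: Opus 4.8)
The plan is to evaluate both characters explicitly in terms of the eigenvalues $a$ and $d$ of $g$ and to recognise the resulting equality as the classical Clebsch--Gordan polynomial identity. First I would record two elementary facts about characters on the diagonal element $g$: since $g$ scales the basis vector $x^{r-j}y^{j}$ of $\mathrm{Sym}^{r}(\OFs)$ by $a^{r-j}d^{j}$, the character of $\mathrm{Sym}^{r}(\OFs)$ at $g$ equals $\sum_{j=0}^{r}a^{r-j}d^{j}$, while $\mathrm{det}^{k}(g)=(ad)^{k}$. Using $r_1\ge r_2$, this gives
$$\CL(g)=\Bigl(\sum_{j=0}^{r_1}a^{r_1-j}d^{j}\Bigr)\Bigl(\sum_{l=0}^{r_2}a^{r_2-l}d^{l}\Bigr),\qquad \CR(g)=\sum_{i=0}^{r_2}(ad)^{i}\sum_{j=0}^{r_1+r_2-2i}a^{r_1+r_2-2i-j}d^{j},$$
the $i$-th summand of $\CR(g)$ being exactly the character of $V_{r_1+r_2-2i}(i)$.

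Next I would prove $\CL(g)=\CR(g)$ by treating both sides as polynomials in $\mathbb{Z}[a,d]$: since the character values in $\OF$ are obtained by evaluating at $a,d\in\F_p^{*}$, it suffices to prove the polynomial identity. Multiplying by $(a-d)^{2}$ and using the geometric-sum telescoping $(a-d)\sum_{j=0}^{m}a^{m-j}d^{j}=a^{m+1}-d^{m+1}$ turns the left side into $(a^{r_1+1}-d^{r_1+1})(a^{r_2+1}-d^{r_2+1})$. For the right side, one telescoping in $j$ gives $(a-d)\CR(g)=\sum_{i=0}^{r_2}\bigl(a^{r_1+r_2-i+1}d^{i}-a^{i}d^{r_1+r_2-i+1}\bigr)$, and multiplying once more by $(a-d)$ and telescoping in the index $i$ collapses this to $a^{r_1+r_2+2}-a^{r_1+1}d^{r_2+1}-a^{r_2+1}d^{r_1+1}+d^{r_1+r_2+2}$, which is again $(a^{r_1+1}-d^{r_1+1})(a^{r_2+1}-d^{r_2+1})$. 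Cancelling the non-zero-divisor $(a-d)^{2}$ in $\mathbb{Z}[a,d]$ yields $\CL(g)=\CR(g)$ for every diagonal $g\in\Gp$.

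The computation is routine; the only point requiring a word of care is that the factor $a-d$ may vanish in $\F_p$, which is precisely why I would argue with the polynomial identity over $\mathbb{Z}$ rather than dividing by $a-d$ directly — equivalently, one checks the coincident case $a=d$ separately, where the claim reduces to the arithmetic-progression identity $\sum_{i=0}^{r_2}(r_1+r_2+1-2i)=(r_1+1)(r_2+1)$. I do not expect any genuine obstacle here: the lemma simply records that the ordinary-character identity underlying Clebsch--Gordan already holds on the diagonal torus of $\Gp$, and it will be combined with the analogous computations on the remaining three types of conjugacy classes to conclude $\CL=\CR$ on all of $\Gp$.
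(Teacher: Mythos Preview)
Your proof is correct and follows essentially the same route as the paper: both compute the character of $V_r(k)$ on $g$ as $(ad)^k\sum_j a^{r-j}d^{j}$ and then verify the Clebsch--Gordan polynomial identity via the telescoping $(a-d)\sum_{j=0}^{m}a^{m-j}d^{j}=a^{m+1}-d^{m+1}$. The only cosmetic difference is that the paper treats the cases $a=d$ and $a\neq d$ separately and divides by $a-d$ once, whereas you clear denominators by $(a-d)^2$ and argue in $\mathbb{Z}[a,d]$; the underlying algebra is identical.
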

	
	\begin{proof}
		The character of $V_r(k)$ on $g$ is $\left(\sum\limits_{i=0}^{r}a^{r-i}d^i\right)\left(ad\right)^k.$ Therefore,
		\begin{itemize}
			\item[] when $a=d,$
			\begin{align*}
				\CR(g)&= a^{r_1+r_2}[(r_1+r_2+1)+(r_1+r_2-1)+\cdots+(r_1-r_2+1)]\\
				&= a^{r_1+r_2}[(r_2+1)(r_1+r_2)+1-1-3-5-\cdots-(2r_2-1)]\\
				&= a^{r_1+r_2}(r_1+1)(r_2+1)\\
				&=\CL(g)
			\end{align*}
			
			\item[] and when $a\neq d,$
			\begin{align*}				
				\CL(g)&=\left(a^{r_1}+a^{{r_1}-1}d+\cdots + d^{r_1}\right)\left(a^{r_2}+a^{{r_2}-1}d+\cdots +d^{r_2}\right)\\				
				&= \dfrac{(a^{r_1+1}-d^{r_1+1})}{(a-d)}\left(a^{r_2}+a^{{r_2}-1}d+\cdots +d^{r_2}\right)\\				&=\dfrac{\left(a^{r_1+r_2+1}+a^{r_1+r_2}d+\cdots+a^{r_1+2}d^{r_2-1}+a^{r_1+1}d^{r_2}\right)}{(a-d)}\\
				&-~\dfrac{\left(a^{r_2}d^{r_1+1}+a^{r_2-1}d^{r_1+2}+\cdots+ad^{r_1+r_2}+d^{r_1+r_2+1}\right)}{(a-d)}\\
				&=\dfrac{a^{r_1+r_2+1}-d^{r_1+r_2+1}}{(a-d)}+\dfrac{ad[a^{r_1+r_2-1}-d^{r_1+r_2-1}]}{(a-d)}+\cdots+\dfrac{(ad)^{r_2}[a^{r_1-r_2+1}-d^{r_1-r_2+1}]}{(a-d)}\\
				&= \left(\chi_{_{V_{{r_1}+{r_2}}}}+
				\chi_{_{V_{{r_1}+{r_2}-2}(1)}}+\cdots+\chi_{_{V_{{r_1}-{r_2}+2}({r_2}-1)}}+\chi_{_{V_{{r_1}-{r_2}}(r_2)}}\right)(g)\\
				&=\CR(g).
			\end{align*} \end{itemize}
	\end{proof}

	\begin{corollary}\label{C1}
		Let $g=\left(
		\begin{matrix}
			a & 1\\
			0 & d
		\end{matrix}
		\right)\in \Gp.$ Then, $\CL(g)=\CR(g).$
	\end{corollary}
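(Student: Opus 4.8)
The plan is to split on whether $a=d$, reducing the non-degenerate case to Lemma~\ref{L1} by a conjugacy argument and treating the degenerate (unipotent-type) case by rerunning the computation in the proof of Lemma~\ref{L1}.

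First, suppose $a\neq d$. Then $g=\left(\begin{matrix} a & 1\\ 0 & d\end{matrix}\right)$ has the two distinct eigenvalues $a,d\in\F_p$, with eigenvectors $\left(\begin{matrix}1\\0\end{matrix}\right)$ (for $a$) and $\left(\begin{matrix}1\\d-a\end{matrix}\right)$ (for $d$), which are linearly independent since $d-a\neq 0$. Hence $g$ is diagonalizable over $\F_p$ and is conjugate in $\Gp$ to $\left(\begin{matrix} a & 0\\ 0 & d\end{matrix}\right)$. Since the ordinary character of a representation is a class function, $\CL(g)=\CL\!\left(\begin{matrix} a & 0\\ 0 & d\end{matrix}\right)$ and similarly for $\CR$, so the claim follows at once from Lemma~\ref{L1}.

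Now suppose $a=d$, so $g=\left(\begin{matrix} a & 1\\ 0 & a\end{matrix}\right)$. Here I would compute both sides directly. On $\mathrm{Sym}^r(\OFs)$ the element $g$ sends $x^{r-i}y^i\mapsto (ax)^{r-i}(x+ay)^i$, and the coefficient of $x^{r-i}y^i$ in this image is $a^{r-i}\cdot a^i=a^r$ for every $i$; thus the strictly upper-triangular entry of $g$ contributes nothing to the diagonal, and $\chi_{_{V_r(k)}}(g)=(r+1)\,a^{r+2k}$ — exactly the value this character takes on the scalar matrix $\left(\begin{matrix} a & 0\\ 0 & a\end{matrix}\right)$. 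Consequently $\CL(g)=(r_1+1)(r_2+1)\,a^{r_1+r_2}$, and summing the characters of the summands of $\CR$ exactly as in the $a=d$ subcase of the proof of Lemma~\ref{L1} gives $\CR(g)=a^{r_1+r_2}\sum_{j=0}^{r_2}(r_1+r_2-2j+1)=(r_1+1)(r_2+1)\,a^{r_1+r_2}$ as well, so $\CL(g)=\CR(g)$.

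There is no serious obstacle: the entire content is the observation that the unipotent-type class behaves, as far as ordinary characters on symmetric powers are concerned, just like the corresponding scalar class, after which the arithmetic is identical to Lemma~\ref{L1}. The only point to keep in mind is that all integer coefficients are being read in $\OF$, which is harmless since the argument never divides. (Alternatively, one could note that $\CL$ and $\CR$ are regular functions on $\mathrm{GL}_2(\OF)$ agreeing on the Zariski-dense set of regular semisimple elements and hence everywhere, but the direct computation is shorter and keeps the exposition elementary.)
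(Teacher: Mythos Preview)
Your argument is correct. Both your cases are valid: the conjugacy reduction when $a\neq d$ is clean, and the direct diagonal computation when $a=d$ is accurate (the diagonal entry at $x^{r-i}y^i$ is indeed $a^r$, and the subsequent arithmetic matches the $a=d$ subcase of Lemma~\ref{L1}).

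The paper's proof reaches the same conclusion more uniformly, without a case split. It simply observes that for any $a,d$ the matrix of $g=\left(\begin{smallmatrix} a & 1\\ 0 & d\end{smallmatrix}\right)$ acting on $\mathrm{Sym}^r(\OFs)$ in the standard basis $\{x^{r-j}y^j\}$ is upper triangular with diagonal entries $a^r, a^{r-1}d,\ldots,d^r$, hence $\chi_{_{V_r}}\!\left(\begin{smallmatrix} a & 1\\ 0 & d\end{smallmatrix}\right)=\chi_{_{V_r}}\!\left(\begin{smallmatrix} a & 0\\ 0 & d\end{smallmatrix}\right)$ regardless of whether $a=d$. This single observation reduces the corollary to Lemma~\ref{L1} in one stroke and avoids invoking conjugacy in $\Gp$ altogether. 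Your approach buys nothing extra here, though the conjugacy argument is a perfectly natural instinct; the paper's upper-triangular observation is just the more economical packaging since it handles both diagonal and non-diagonalizable upper-triangular elements simultaneously.
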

	
	\begin{proof}
		Let $g=\left(
		\begin{matrix}
			a & 1\\
			0 & d
		\end{matrix}
		\right)\in \Gp$ and $\text{Sym}^r(\OFs)$ be a representation of $\Gp.$ Then with respect to the standard basis, note that $g\longmapsto \left(\begin{matrix}
			a^r &*&\cdots&*\\
			0 &a^{r-1}d&\cdots& *\\
			\vdots&\vdots&\ddots&\vdots\\
			0&0&\cdots&d^r
		\end{matrix}
		\right).$ Thus, $\chi_{_{V_r}}\left(\begin{matrix}
			a & 1\\
			0 & d
		\end{matrix}
		\right)=\chi_{_{V_r}}\left(
		\begin{matrix}
			a & 0\\
			0 & d
		\end{matrix}
		\right).$ Using similar arguments as in Lemma \ref{L1}, we have that $\CL(g)=\CR(g).$
	\end{proof}
	
	\begin{corollary}\label{C2}
		If the characteristic polynomial of $g\in \Gp$ is irreducible over $\F_p$ then, $\CL(g)=\CR(g).$
	\end{corollary}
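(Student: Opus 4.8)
The plan is to reduce the statement to the computation already carried out in the $a\neq d$ case of Lemma~\ref{L1}. First I would determine the eigenvalues of $g$: since its characteristic polynomial is irreducible of degree two over $\F_p$, its roots lie in $\F_{p^2}\subseteq\OF$ and form a single Galois orbit, so they are $\lambda$ and $\lambda^p$ for some $\lambda\in\F_{p^2}^*$; moreover $\lambda\neq\lambda^p$ precisely because irreducibility forces $\lambda\notin\F_p$. Hence $g$ is diagonalizable over $\OF$ with two distinct eigenvalues $\lambda$ and $\lambda^p$.

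Next I would record the character formula underlying Lemma~\ref{L1}: for any $h\in\mathrm{GL}_2(\OF)$ with eigenvalues $\alpha,\beta$, the eigenvalues of $\text{Sym}^r(h)$ are the monomials $\alpha^{r-i}\beta^i$ for $0\leq i\leq r$, so the ordinary character of $V_r(k)$ at $h$ equals $(\alpha\beta)^k\sum_{i=0}^{r}\alpha^{r-i}\beta^i$, and the character of a tensor product is the product of the characters. Evaluating at $\alpha=\lambda$ and $\beta=\lambda^p$ gives
\[
\CL(g)=\left(\sum_{i=0}^{r_1}\lambda^{r_1-i}(\lambda^{p})^{i}\right)\left(\sum_{j=0}^{r_2}\lambda^{r_2-j}(\lambda^{p})^{j}\right),\qquad
\CR(g)=\sum_{m=0}^{r_2}(\lambda^{1+p})^{m}\sum_{i=0}^{r_1+r_2-2m}\lambda^{r_1+r_2-2m-i}(\lambda^{p})^{i}.
\]
The manipulation in the $a\neq d$ branch of Lemma~\ref{L1} — expanding the product, writing each piece as a difference of geometric sums and regrouping — is a purely formal identity in two symbols $a,d$ that uses nothing beyond the invertibility of $a-d$ (which is what lets one write $a^{n+1}+a^{n}d+\cdots+d^{n}=(a^{n+1}-d^{n+1})/(a-d)$). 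Substituting $a=\lambda$ and $d=\lambda^p$, which is legitimate exactly because $\lambda\neq\lambda^p$, therefore transforms $\CL(g)$ into $\CR(g)$, proving the corollary.

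I do not expect a genuine obstacle here; the single point that must be stated with care is that although the eigenvalues $\lambda,\lambda^p$ of $g$ lie in $\F_{p^2}$ rather than in $\F_p$, the only property of them used in Lemma~\ref{L1} is $\lambda\neq\lambda^p$, which irreducibility guarantees, so that computation transfers verbatim. Once Corollary~\ref{C2} is in hand, $\CL$ and $\CR$ agree on every conjugacy-class type of $\Gp$ — scalars and split semisimple elements by Lemma~\ref{L1}, the non-semisimple class by Corollary~\ref{C1}, and the present elliptic case — and then the dimension identity $(r_1+1)(r_2+1)=\sum_{m=0}^{r_2}(r_1+r_2-2m+1)$ together with Corollary~\ref{C} yields the semisimplified decomposition asserted in Theorem~\ref{T1}.
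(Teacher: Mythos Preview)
Your argument is correct and is essentially the paper's own proof: the paper observes that $g$ is conjugate in $\mathrm{GL}_2(\F_p[\alpha])$ to $\mathrm{diag}(\alpha,\overline{\alpha})$ and then invokes the $a\neq d$ computation of Lemma~\ref{L1} verbatim, which is exactly what you spell out. Your additional remarks on how the corollary feeds into Theorem~\ref{T1} are accurate but go beyond what Corollary~\ref{C2} itself requires.
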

	
	\begin{proof}
		Let $x^2+ax+b$ be the characteristic polynomial of $g$ and $\alpha\in\overline{\F}_p$ be its root. Then the rational canonical form of $g$ is given by $\left(
		\begin{matrix}
			0 & -b\\
			1 & -a
		\end{matrix}
		\right)$ which is conjugate to $\left(
		\begin{matrix}
			\alpha & 0\\
			0 & \overline\alpha
		\end{matrix}
		\right)\text{ in } \text{GL}_2(\F_p[\alpha]).$ Using similar arguments as in Lemma \ref{L1}, the result holds.
	\end{proof}
	\begin{proof}[Proof of Theorem \ref{T1}] We have shown that the characters $\CL$ and $\CR$ agree on the representatives of all the conjugacy classes of $\Gp$ (Lemma \ref{L1}, Corollary \ref{C1} and Corollary \ref{C2}). Since character is a class function, we get $\CL=\CR$ on $\Gp.$
		
		Note that in Theorem \ref{T1}, the dimension of LHS in (\ref{E1}) is $(r_1+1)(r_2+1),$ while the dimension of RHS in (\ref{E1}) is		
		\begin{align*}
			\text{dim(RHS)}&=(r_1+r_2+1)+(r_1+r_2-2+1)+\cdots+ (r_1-r_2+1)\\
			&=(r_1+\mathbf{r_2}+1)+\cdots+\left(r_1+\mathbf{r_2-(2r_2-2)}+1\right)+\left(r_1+\mathbf{r_2-(2r_2)}+1\right)\\
			&=(r_2+1)(r_1+1)+\left[\mathbf{r_2+(r_2-2)+\cdots+(r_2-(2r_2-2))+(r_2-2r_2)}\right]\\
			&=(r_2+1)(r_1+1)+(r_2+1)r_2-2\left(\dfrac{r_2(r_2+1)}{2}\right)\\
			&= (r_2+1)(r_1+1)= \text{dim(LHS)}.
		\end{align*}
		Since $r_1+r_2<p,$ all the components on the RHS of (\ref{E1}) are irreducible representations of $\Gp.$ Thus, the theorem follows from Corollary \ref{C}.
	\end{proof}

	The conclusion of Theorem \ref{T1} is weaker in the sense that the isomorphism is only up to semisimplification and $p$ has to be sufficiently large. We make this result stronger using the following lemma. This lemma is result 5.2 in \cite{DG}, but it is proved by using the exactness of sequence 5.1 in the same article, while we use our result \ref{T1} specifically to do the same.
	\begin{lemma}\label{L2}
		For $1\leq r< p-1,$  there is a $\Gp\text{-isomorphism}$ $$V_r\otimes V_1\cong V_{r+1}\oplus V_{r-1}(1).$$
	\end{lemma}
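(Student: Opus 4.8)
The plan is to use Theorem \ref{T1} to identify the two composition factors of $M\coloneqq V_r\otimes V_1$ and then to upgrade that semisimplification statement to a genuine direct-sum decomposition by writing down one explicit $\Gp$-equivariant splitting. Since $1\le r<p-1$ we have $r+1<p$, so Theorem \ref{T1} applies with $\vec{r}=(r,1)$ and shows that $M$ has precisely the composition factors $V_{r+1}$ and $V_{r-1}(1)$, each with multiplicity one; moreover (as in the proof of that theorem) both are irreducible $\Gp$-modules, of dimensions $r+2$ and $r$, and they are not isomorphic.

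First I would record the multiplication map $\mu\colon M\to V_{r+1}$, $f\otimes\ell\mapsto f\ell$. It is $\Gp$-equivariant because $\Gp$ acts on $V_r$, on $V_1$, and on $V_{r+1}$ by the same substitution $x\mapsto ax+cy,\ y\mapsto bx+dy$, and it is surjective because every homogeneous monomial of degree $r+1$ lies in its image. Hence $\ker\mu$ is a $\Gp$-submodule of dimension $(2r+2)-(r+2)=r$; its composition factors form a sub-multiset of $\{V_{r+1},V_{r-1}(1)\}$, and only $V_{r-1}(1)$ fits into dimension $r$, so $\ker\mu\cong V_{r-1}(1)$. We therefore have a short exact sequence $0\to V_{r-1}(1)\to M\xrightarrow{\ \mu\ }V_{r+1}\to 0$ of $\Gp$-modules, and it remains to split it.

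To split it I would exhibit the section $s\colon V_{r+1}\to M$, $s(h)=\tfrac{1}{r+1}\bigl(\partial_x h\otimes x+\partial_y h\otimes y\bigr)$, which is well defined precisely because $r+1<p$ makes $r+1$ invertible in $\OF$ — this is the one place where the hypothesis $r<p-1$ is genuinely used. Euler's identity $x\,\partial_x h+y\,\partial_y h=(\deg h)\,h=(r+1)h$ gives $\mu\circ s=\mathrm{id}_{V_{r+1}}$ at once, so the only thing left to verify is that $s$ is $\Gp$-equivariant. This is the single real computation in the argument: for $\gamma=\left(\begin{smallmatrix}a&b\\c&d\end{smallmatrix}\right)\in\Gp$ one has the chain-rule identities $\partial_x(\gamma\cdot h)=a\,\gamma\cdot(\partial_x h)+b\,\gamma\cdot(\partial_y h)$ and $\partial_y(\gamma\cdot h)=c\,\gamma\cdot(\partial_x h)+d\,\gamma\cdot(\partial_y h)$, and together with $\gamma\cdot x=ax+cy$, $\gamma\cdot y=bx+dy$ in the second tensor factor these make $\gamma\cdot s(h)$ recombine term by term into $s(\gamma\cdot h)$. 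Once $s$ is a $\Gp$-section of $\mu$ the sequence splits, giving $M\cong V_{r+1}\oplus\ker\mu\cong V_{r+1}\oplus V_{r-1}(1)$.

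The only obstacle is thus the equivariance check for $s$; once Theorem \ref{T1} has pinned down the two composition factors and their multiplicities, everything else is forced. If one would rather avoid $s$, the same conclusion follows by instead producing a second $\Gp$-equivariant surjection $\delta\colon M\to V_{r-1}(1)$ — for instance the contraction $f\otimes(ux+vy)\mapsto v\,\partial_x f-u\,\partial_y f$, which does carry a genuine $\det$-twist — and checking by Schur's lemma that $\delta|_{\ker\mu}$ is an isomorphism; this forces $\ker\mu\cap\ker\delta=0$, whence $M=\ker\mu\oplus\ker\delta\cong V_{r-1}(1)\oplus V_{r+1}$ by a dimension count.
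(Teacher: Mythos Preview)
Your proof is correct and follows essentially the same strategy as the paper: invoke Theorem \ref{T1} to pin down the two composition factors, then produce explicit $\Gp$-maps to upgrade the semisimplification to a genuine direct sum. The only cosmetic difference is that your primary argument splits the multiplication map $\mu$ by a section $s$ built from Euler's identity (using that $r+1$ is invertible), whereas the paper instead exhibits a second surjection $P_2\colon f\otimes X\mapsto\partial_Y f,\ f\otimes Y\mapsto-\partial_X f$ onto $V_{r-1}(1)$; your closing ``alternative'' map $\delta$ is, up to sign, precisely the paper's $P_2$.
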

	
	\begin{proof}
		Let $1\leq r<p-1.$ Then we have $(V_r\otimes V_1)^{\text{ss}}\cong V_{r+1}\oplus V_{r-1}(1),$ by Theorem \ref{T1}. Thus, it is sufficient to construct onto $\Gp\text{-linear}$ maps from $V_r\otimes V_1$ to each of $V_{r+1}$ and $V_{r-1}(1).$
		
		Define $P_1:V_r\otimes V_1\to V_{r+1}$ by the linear extension of $f_1\otimes f_2\mapsto f_1f_2.$ One can easily verify that $P_1$ is an onto $\Gp\text{-linear}$ map. Let $\{X,Y\}$ be a basis of $V_1$. Then, define \\
		$P_2: V_r\otimes V_1\to V_{r-1}(1)$ by the linear extension of the map, which sends $$f\otimes X\mapsto \dfrac{\partial{f}}{\partial{Y}}\text{ and }f\otimes Y\mapsto -\dfrac{\partial{f}}{\partial{X}}.$$ Note that $P_2$ is onto: $P_2\left(\dfrac{X^{r-1-i}Y^{i+1}}{i+1}\otimes X\right)=X^{r-1-i}Y^{i}, \text{ for all }i=0,1,\cdots,r-1.$ $\Gp\text{-linearity}$ of $P_2$ can be checked using the properties of formal partial derivatives.
	\end{proof}
	
	\begin{remark}\label{R1}
		Since $V_{r_1}\otimes V_{r_2}$ and $V_{r_2}\otimes V_{r_1}$ are isomorphic, it is enough to treat the cases where $1\leq r_2\leq r_1< p.$
	\end{remark}
	
	\begin{remark}\label{R2}
		As $\Gp\text{-representations, } V_{r_1}\otimes V_0\cong V_{r_1}$ and $V_0\otimes V_{r_2}\cong V_{r_2}$ trivially.
	\end{remark}
	
	\begin{theorem}\label{T2}
		Let $\vec{r}=(r_1,r_2)$ with $0\leq r_2\leq r_1< p-r_2.$ Then $${V_{\vec{r}}|}_{\Gp}\cong V_{r_1+r_2}\oplus V_{r_1+r_2-2}(1)\oplus \cdots\oplus V_{r_1-r_2}(r_2).$$
	\end{theorem}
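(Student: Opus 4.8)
The plan is to fix the prime $p$ and prove the statement by induction on $r_2$. By Remark~\ref{R1} we may assume throughout that $r_2\le r_1$, and by the reduction recorded at the start of Section~\ref{S3} it suffices to decompose the $\Gp$-module $V_{r_1}\otimes V_{r_2}$, where now $r_1+r_2<p$. The base cases are free: $r_2=0$ is Remark~\ref{R2}, and $r_2=1$ is exactly Lemma~\ref{L2} (whose hypothesis $1\le r_1<p-1$ holds since $r_1<p-r_2\le p-1$). For the inductive step $r_2\ge 2$, I would introduce the auxiliary $\Gp$-module
\[
W:=V_{r_1}\otimes V_{r_2-1}\otimes V_1
\]
and decompose it in two ways, using the commutativity and associativity of the tensor product of representations together with the distributivity of $\otimes$ over $\oplus$.

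First, writing $W\cong V_{r_1}\otimes\bigl(V_{r_2-1}\otimes V_1\bigr)$ and applying Lemma~\ref{L2} to $V_{r_2-1}\otimes V_1$ (valid since $1\le r_2-1<r_2\le r_1<p-1$), then applying the inductive hypothesis to $V_{r_1}\otimes V_{r_2-2}$, yields
\[
W\;\cong\;(V_{r_1}\otimes V_{r_2})\ \oplus\ \bigoplus_{m=1}^{r_2-1}V_{r_1+r_2-2m}(m).
\]
Second, writing $W\cong\bigl(V_{r_1}\otimes V_1\bigr)\otimes V_{r_2-1}$, applying Lemma~\ref{L2} to $V_{r_1}\otimes V_1$, and then applying the inductive hypothesis to each of $V_{r_1+1}\otimes V_{r_2-1}$ and $V_{r_1-1}\otimes V_{r_2-1}$ (the relevant inequalities being immediate from $r_2\le r_1$ and $r_1+r_2<p$), one obtains after re-indexing the determinant twists
\[
W\;\cong\;\Bigl(\bigoplus_{m=0}^{r_2}V_{r_1+r_2-2m}(m)\Bigr)\ \oplus\ \bigoplus_{m=1}^{r_2-1}V_{r_1+r_2-2m}(m).
\]
Since $r_1+r_2<p$, each $V_{r_1+r_2-2m}(m)$ occurring above is an irreducible $\Gp$-representation, so by the Krull--Schmidt theorem for finite-dimensional $\OF[\Gp]$-modules I may cancel the common summand $\bigoplus_{m=1}^{r_2-1}V_{r_1+r_2-2m}(m)$ from the two expressions for $W$, which leaves $V_{r_1}\otimes V_{r_2}\cong\bigoplus_{m=0}^{r_2}V_{r_1+r_2-2m}(m)$, precisely the asserted decomposition.

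I do not anticipate a serious obstacle: once Lemma~\ref{L2} is in hand the argument is entirely formal, and the only thing demanding care is the index bookkeeping — chiefly checking that every invocation of Lemma~\ref{L2} stays within the range $1\le r<p-1$ and that every use of the inductive hypothesis respects "smaller exponent $\le$ larger exponent, sum $<p$", all of which follow from $r_2\le r_1$ and $r_1+r_2<p$. The subtlest point is the application of the inductive hypothesis to $V_{r_1-1}\otimes V_{r_2-1}$ when $r_1=r_2$, i.e. to the "diagonal" product $V_{r_2-1}\otimes V_{r_2-1}$; this is legitimate because the statement being proved, hence the inductive hypothesis, already includes the equality case $r_2=r_1$. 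It is also worth noting that this proof reuses the character computations of Section~\ref{S3} only through Lemma~\ref{L2}; alternatively one could deduce the theorem by combining Theorem~\ref{T1} with Serre's semisimplicity criterion from~\cite{JPS} (applicable since $\sum_i(\dim V_{r_i}-1)=r_1+r_2<p$), but the inductive argument above keeps this section elementary and self-contained.
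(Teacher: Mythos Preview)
Your proposal is correct and follows essentially the same route as the paper: induction on $r_2$ with base cases handled by Remark~\ref{R2} and Lemma~\ref{L2}, and an inductive step that decomposes $V_{r_1}\otimes V_{r_2-1}\otimes V_1$ in two ways via Lemma~\ref{L2} and then cancels the common summands. The only differences are cosmetic---the paper indexes the induction as ``from $r_2$ to $r_2+1$'' rather than ``from $r_2-1$ to $r_2$'', and leaves the Krull--Schmidt cancellation implicit---so your writeup is a faithful (indeed slightly more explicit) version of the paper's own argument.
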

	
	\begin{proof}
		We will prove the result by induction on $r_2.$ When $r_2=0,$ the theorem holds by Remark \ref{R2}.
		Let $r_2=1,$ then for $1\leq r_1<p-1,$ the result holds in this case by Lemma \ref{L2}.
		
		As $0\leq r_2\leq r_1<p-r_2,$ all the components in the decomposition are irreducible. So, we have $r_2\leq \frac{p-1}{2}.$ Let the induction hypothesis be that for all $r_2\in\left\{1,2,\cdots,\frac{p-1}{2}-1\right\}$ with $r_2\leq r_1<p-r_2,$ 
		$$V_{r_1}\otimes V_{r_2}\cong V_{r_1+r_2}\oplus V_{r_1+r_2-2}(1)\oplus\cdots\oplus V_{r_1-r_2}(r_2).$$
		
		We have $V_{r_1}\otimes (V_1\otimes V_{r_2})\cong (V_{r_1}\otimes V_1)\otimes V_{r_2}.$
		Since $1\leq r_2\leq r_1<p-r_2\leq p-1,$ by Lemma \ref{L2} we get $V_{r_1}\otimes (V_{r_2+1}\oplus V_{r_2-1}(1))\cong (V_{r_1+1}\oplus V_{r_1-1}(1))\otimes V_{r_2},$ that is $$(V_{r_1}\otimes V_{r_2+1})\oplus (V_{r_1}\otimes V_{r_2-1}(1))\cong (V_{r_1+1}\otimes V_{r_2})\oplus (V_{r_1-1}(1)\otimes V_{r_2}).$$
		Further, using induction hypothesis, for $r_2\leq r_1-1<p-r_2$ and $r_2\leq r_1+1<p-r_2,$ we have
		\begin{multline*}
			(V_{r_1}\otimes V_{r_2+1})\oplus [V_{r_1+r_2-1}(1)\oplus V_{r_1+r_2-3}(2)\oplus\cdots\oplus V_{r_1-r_2+1}(r_2)]\cong\\
            [V_{r_1+r_2+1}\oplus V_{r_1+r_2-1}(1)\oplus \cdots \oplus V_{r_1-r_2+1}(r_2)]\oplus[V_{r_1+r_2-1}(1)\oplus V_{r_1+r_2-3}(2)\oplus \cdots \oplus V_{r_1-r_2-1}(r_2+1)].
		\end{multline*}
		
		And so, for $r_2+1\leq r_1<p-r_2-1,$ we get
		$$V_{r_1}\otimes V_{r_2+1}\cong V_{r_1+r_2+1}\oplus V_{r_1+r_2-1}(1)\oplus \cdots\oplus V_{r_1-r_2-1}(r_2+1).$$
	\end{proof}

\begin{remark}\label{R3}
	By taking twists by $\mathrm{det}^k$ for $k\in \{0,1,\cdots, q-2\}$ in the above theorem, we get $${V_{\vec{r}}(k)|}_{\Gp}\cong V_{r_1+r_2}(k)\oplus V_{r_1+r_2-2}((1+k))\oplus\cdots\oplus V_{r_1-r_2}((r_2+k)).$$
	\end{remark}
	
	\begin{corollary}
		Let $0\leq r_2\leq r_1<p-r_2,$ then the restriction of $V_{\vec{r}}$ as an irreducible representation of $\text{SL}_2(\F_q)$ to $\text{SL}_2(\F_p)$ is $V_{r_1+r_2}\oplus V_{r_1+r_2-2}\oplus\cdots\oplus V_{r_1-r_2}.$
	\end{corollary}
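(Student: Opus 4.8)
The plan is to deduce the $\mathrm{SL}_2$ statement directly from Theorem \ref{T2} by restricting the already-known $\Gp$-decomposition further to $\mathrm{SL}_2(\F_p)$. First I would observe that $V_{\vec r}$, being irreducible as a representation of $\mathrm{GL}_2(\F_q)$ with all $r_i<p$, restricts to an irreducible representation of $\mathrm{SL}_2(\F_q)$: indeed $\mathrm{GL}_2 = \mathrm{SL}_2 \cdot Z$ where $Z$ is the centre, the centre acts by a scalar on $V_{\vec r}$, so any $\mathrm{SL}_2(\F_q)$-submodule is automatically a $\mathrm{GL}_2(\F_q)$-submodule; hence $V_{\vec r}|_{\mathrm{SL}_2(\F_q)}$ is irreducible and it makes sense to speak of its restriction to $\mathrm{SL}_2(\F_p)$. (One may equally cite the standard classification of irreducible mod $p$ representations of $\mathrm{SL}_2(\F_q)$ as $\bigotimes \mathrm{Sym}^{r_i}$ with $0\le r_i<p$, the determinant twist being trivial on $\mathrm{SL}_2$.)

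Next, the key point: restriction is transitive, so $V_{\vec r}|_{\mathrm{SL}_2(\F_p)} = \bigl(V_{\vec r}|_{\Gp}\bigr)|_{\mathrm{SL}_2(\F_p)}$, and Theorem \ref{T2} already computes the middle term as $V_{r_1+r_2}\oplus V_{r_1+r_2-2}(1)\oplus\cdots\oplus V_{r_1-r_2}(r_2)$ as $\Gp$-modules. I would then restrict each summand to $\mathrm{SL}_2(\F_p)$: for any $j$ and $k$, the module $V_j(k) = \mathrm{Sym}^j \otimes \det^k$ restricts to $\mathrm{Sym}^j$ on $\mathrm{SL}_2(\F_p)$, since the determinant is identically $1$ there, so the twist $\det^k$ disappears. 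Therefore $V_{\vec r}|_{\mathrm{SL}_2(\F_p)} \cong V_{r_1+r_2}\oplus V_{r_1+r_2-2}\oplus\cdots\oplus V_{r_1-r_2}$ as claimed, where now $V_j$ denotes $\mathrm{Sym}^j(\OFs)$ regarded as an $\mathrm{SL}_2(\F_p)$-module. Finally I would note that under the hypothesis $r_1+r_2<p$ each $\mathrm{Sym}^j$ appearing ($j$ ranging over $r_1-r_2, r_1-r_2+2,\dots,r_1+r_2$) is still irreducible as an $\mathrm{SL}_2(\F_p)$-module, so this is genuinely the decomposition into irreducibles, matching the form of the statement.

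I do not expect a serious obstacle here; the corollary is essentially a formal consequence of Theorem \ref{T2} together with the triviality of $\det$ on $\mathrm{SL}_2$. The only point requiring a word of care is the remark that the summands $V_j$ remain irreducible over $\mathrm{SL}_2(\F_p)$ and pairwise non-isomorphic (which holds because the $j$'s are distinct integers in $\{0,\dots,p-1\}$), so that no further collapsing or extension phenomenon occurs when passing from $\Gp$ to $\mathrm{SL}_2(\F_p)$; since $r_1+r_2<p$ this is immediate. If one wished to be completely self-contained one could instead re-run the character argument of Lemma \ref{L1} and Corollary \ref{C} over $\mathrm{SL}_2(\F_p)$ using the conjugacy classes of $\mathrm{SL}_2(\F_p)$, but piggy-backing on Theorem \ref{T2} via transitivity of restriction is cleaner and is the route I would take.
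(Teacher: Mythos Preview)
Your proposal is correct and follows essentially the same route as the paper: both arguments apply Theorem~\ref{T2} to obtain the $\Gp$-decomposition, then restrict further to $\mathrm{SL}_2(\F_p)$ where the $\det^k$ twists become trivial. The paper's proof is terser (it simply cites the classification of irreducible $\mathrm{SL}_2(\F_q)$-modules and the implication ``$\Gp$-isomorphism $\Rightarrow$ $\mathrm{SL}_2(\F_p)$-isomorphism''), whereas you spell out transitivity of restriction and the irreducibility of the summands more carefully, but the underlying idea is identical.
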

	\begin{proof}
		The irreducible representations of $\text{SL}_2(\F_q)$ up to isomorphism are $V_{r_1}\otimes V_{r_2},$ where $r_1,r_2\in\{0,1,\cdots, p-1\}$ with respect to the same action as for $\Gq.$ Now, $\Gp\text{-isomorphism}$ in Theorem \ref{T2} implies $\text{SL}_2(\F_p)\text{-isomorphism}$ and so 
		\begin{align*}
			{V_{\vec{r}}|}_{\text{SL}_2(\F_p)}=V_{r_1}\otimes V_{r_2}&\cong V_{r_1+r_2}\oplus V_{r_1+r_2-2}(1)\oplus \cdots\oplus V_{r_1-r_2}(r_2)\\
			&\cong V_{r_1+r_2}\oplus V_{r_1+r_2-2}\oplus\cdots\oplus V_{r_1-r_2}.        
		\end{align*}
	\end{proof}
	
	\subsection{Complete structure for prime $p=2$}\label{SS1}
	For this subsection, let $p=2$ and $q=4.$  The irreducible representations of $G_2$ are $V_{\vec{r}}$ such that $0\leq r_1,r_2<2.$ Remark \ref{R2} treats the restriction of all but one of these namely $V_1\otimes V_1,$ which we shall explicitly study. Note that the only irreducible representations of $G_2$ are $V_0$ and $V_1,$ that is trivial ($\mathds{1}$) and standard (std) representations respectively.
	
	\begin{lemma}
		Let $V=V_1\otimes V_1.$ Then $\mathds{1}\oplus\mathrm{std}\subseteq V$ as $G_2\text{-representations}$.
	\end{lemma}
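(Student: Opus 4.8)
The plan is to construct an explicit injective $G_2$-homomorphism $\mathds{1}\oplus\mathrm{std}\hookrightarrow V=V_1\otimes V_1$. Since $\mathds{1}$ and $\mathrm{std}=V_1$ are the only irreducible $\OF$-representations of $G_2$, it is enough to exhibit inside $V$ a trivial line $T$ together with an irreducible $2$-dimensional submodule $S$: then $T\cap S$ is a proper submodule of the simple module $S$, hence $0$, so $T\oplus S\cong\mathds{1}\oplus\mathrm{std}$ sits inside $V$.

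For the trivial line I would take $T=\langle X\otimes Y-Y\otimes X\rangle$, which realizes $\wedge^2 V_1=\det$: a short computation gives $g\cdot(X\otimes Y-Y\otimes X)=\det(g)\,(X\otimes Y-Y\otimes X)$, and since every element of $\mathrm{GL}_2(\F_2)$ has determinant $1$ this forces $T\cong\mathds{1}$ (and $X\otimes Y-Y\otimes X=X\otimes Y+Y\otimes X$ in characteristic $2$). For $S$ the tidiest route is to bring in the $\OF$-permutation module $M=\bigoplus_w\OF\,e_w$, where $w$ runs over the three vectors $X,Y,X+Y$ of $V_1$ (which $G_2$ permutes, its matrices having $\F_2$-entries) and $g\cdot e_w=e_{gw}$. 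One checks $M\cong\mathds{1}\oplus\mathrm{std}$: the vector $e_X+e_Y+e_{X+Y}$ is $G_2$-fixed and spans a trivial line; the submodule $N=\bigl\{\sum_w c_w e_w:\sum_w c_w=0\bigr\}$ has no nonzero $G_2$-fixed vector (such a vector has all three coordinates equal, and three equal coordinates summing to $0$ in characteristic $2$ --- where $1+1+1=1\neq 0$ --- must vanish), so $N$ is irreducible, hence $\cong\mathrm{std}$; and the trivial line is not contained in $N$, so $M=\langle e_X+e_Y+e_{X+Y}\rangle\oplus N$ by a dimension count. Finally $\psi\colon e_w\mapsto w\otimes w$ extends to a $G_2$-equivariant linear map $M\to V$, because $g\cdot(w\otimes w)=gw\otimes gw$, and it is injective since $X\otimes X$, $Y\otimes Y$ and $(X+Y)\otimes(X+Y)=X\otimes X+X\otimes Y+Y\otimes X+Y\otimes Y$ are linearly independent in the $4$-dimensional space $V$. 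Transporting the decomposition of $M$ along $\psi$ then yields the desired copy of $\mathds{1}\oplus\mathrm{std}$ in $V$; concretely $\psi$ sends it onto $T=\langle X\otimes Y+Y\otimes X\rangle$ together with $S=\bigl\langle X\otimes X+X\otimes Y+Y\otimes X,\ Y\otimes Y+X\otimes Y+Y\otimes X\bigr\rangle\cong\mathrm{std}$.

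The one step that requires a moment's thought is spotting that the diagonal symmetric tensors $w\otimes w$ already span a copy of $\mathds{1}\oplus\mathrm{std}$; the rest is bookkeeping and dimension counts. If one prefers to stay inside $V$ throughout, one can bypass $M$ and check the explicit submodule $S$ directly: with $v=X\otimes X+X\otimes Y+Y\otimes X$ and $w=Y\otimes Y+X\otimes Y+Y\otimes X$, and using that $G_2$ is generated by $\left(\begin{smallmatrix}1&1\\0&1\end{smallmatrix}\right)$ and $\left(\begin{smallmatrix}1&0\\1&1\end{smallmatrix}\right)$, a direct computation shows that these two matrices preserve $\langle v,w\rangle$ and act on the ordered basis $(v,w)$ by exactly the matrices by which they act on $(X,Y)$ in $V_1$; hence $\langle v,w\rangle\cong\mathrm{std}$, and $T\cap\langle v,w\rangle=0$ by dimension. (One could also argue non-constructively that $\mathrm{std}$ is a projective $\OF G_2$-module --- its restriction to the Sylow $2$-subgroup $\bigl\langle\left(\begin{smallmatrix}1&1\\0&1\end{smallmatrix}\right)\bigr\rangle$ being free --- so that $V=\mathrm{std}\otimes\mathrm{std}$ is projective and the claim drops out of the module structure of $\OF G_2$; but the explicit description fits this subsection better.)
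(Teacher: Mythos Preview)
Your proof is correct and arrives at exactly the same explicit submodules as the paper---the trivial line $\langle X\otimes Y+Y\otimes X\rangle$ and the $2$-dimensional $\langle X\otimes X+X\otimes Y+Y\otimes X,\ Y\otimes Y+X\otimes Y+Y\otimes X\rangle$---but the primary route is genuinely different. The paper simply writes these vectors down and verifies stability under a generic $g\in G_2$ by direct expansion of $g\cdot(X\otimes X+X\otimes Y-Y\otimes X)$; your main argument instead recognizes the span of the diagonal tensors $w\otimes w$ (for $w\in\{X,Y,X+Y\}$) as the natural permutation module for $G_2\cong S_3$, decomposes that abstractly as $\mathds{1}\oplus\mathrm{std}$, and transports along the equivariant embedding $e_w\mapsto w\otimes w$. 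This buys you an explanation of \emph{where} the basis vectors come from rather than an after-the-fact check, and the permutation-module viewpoint would adapt to other small cases. The paper's bare-hands verification is shorter and entirely self-contained. Your alternative paragraph (checking $\langle v,w\rangle$ directly on the two generators $\left(\begin{smallmatrix}1&1\\0&1\end{smallmatrix}\right)$ and $\left(\begin{smallmatrix}1&0\\1&1\end{smallmatrix}\right)$) is essentially the paper's approach in compressed form.
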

	\begin{proof}
		Let $\left\langle\{\xtx, \xty,\ytx,\yty\}\right\rangle_{\overline{\F}_2}$ be a basis of $V$ and $g=\left(\begin{matrix}
			a & b \\
			c & d
		\end{matrix}\right)\in G_2.$ Then $\mathds{1}=\langle\{\xty-\ytx\}\rangle_{\overline{\F}_2}$ is a trivial subrepresentation of $V.$ This can be verified easily since the characteristic of the coefficient field is $2$ and so, $ad-bc=1.$ Similarly, $\mathrm{std}=\langle\{\xtx+\xty-\ytx, ~\yty+\xty-\ytx\}\rangle_{\overline{\F}_2}$ is a standard subrepresentation of $V.$ Here, $G_2\text{-linearity}$ can easily be checked on the basis elements, for instance:
		 \begin{align*}
            g\cdot(\xtx+\xty-\ytx)=&  (aX+cY)\otimes(aX+cY)+(aX+cY)\otimes(bX+dY)-(bX+dY)\otimes(aX+cY)\\
				=& (a^2+ab-ab)(\xtx)+(ac+ad-bc)(\xty)+\\
				& (ac+bc-ad)(\ytx)+(c^2+cd-cd)(\yty)\\
				=& a(\xtx)+(a+c)(\xty-\ytx)+c(\yty)\\
				&(\because ac+1=a+c, \text{ for all } g\in G_2)\\
				=& a(\xtx+\xty-\ytx)+c(\yty+\xty-\ytx).
			\end{align*}

		Clearly, $\mathds{1}\cap \mathrm{std}=\{0\}$ and hence, proved.
	\end{proof}
	
	\begin{proposition}
		The socle filtration of $V$ is $0\subseteq \mathds{1}\oplus \mathrm{std}\subseteq V$ (with respect to the notations of the above lemma).
	\end{proposition}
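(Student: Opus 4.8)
The plan is to show that $V = V_1 \otimes V_1$ has Loewy length exactly $2$, i.e. that the radical filtration (equivalently the socle filtration, since for a self-dual module these coincide) has length $2$ with $\mathrm{rad}(V) = \mathrm{soc}(V) = \mathds{1} \oplus \mathrm{std}$. Since $\dim V = 4$ and the previous lemma gives $\mathds{1} \oplus \mathrm{std} \subseteq \mathrm{soc}(V)$, which already accounts for $3$ of the $4$ dimensions, we need only two things: first, that $V$ is \emph{not} semisimple (so $\mathrm{soc}(V) \ne V$, forcing $\mathrm{soc}(V) = \mathds{1} \oplus \mathrm{std}$ since there is no room for a third summand and the only irreducibles of $G_2$ are $\mathds{1}$ and $\mathrm{std}$); and second, that $V / \mathrm{soc}(V)$ is the one remaining composition factor, which is automatically irreducible (it is one-dimensional, hence $\cong \mathds{1}$), so the filtration $0 \subseteq \mathds{1} \oplus \mathrm{std} \subseteq V$ is indeed the socle filtration and there is no $0 \subseteq \mathds{1} \oplus \mathrm{std} = \mathrm{soc}_1 \subsetneq \mathrm{soc}_2 \subsetneq V$ intermediate step.

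The key step is therefore to prove $V$ is not semisimple, for which I would argue by contradiction: if $V$ were semisimple, then $V \cong \mathds{1} \oplus \mathds{1} \oplus \mathrm{std}$ (the composition factors of $V_1 \otimes V_1$ over $G_2$ — one can read these off from characters or from Theorem~\ref{T1}/Corollary~\ref{C}, which over $\F_4$ give $(V_1 \otimes V_1)^{\mathrm{ss}} \cong V_2 \oplus V_0(1)$, and over $G_2$ one has $V_2|_{G_2}$ has composition factors $\mathds{1}, \mathrm{std}$ while $V_0(1) = \mathds{1}$). In particular $\dim V^{G_2} = 2$ if $V$ were semisimple. But a direct computation of the fixed space $V^{G_2}$ shows it is one-dimensional, spanned by $\xty - \ytx$: any $G_2$-fixed vector $\alpha\, \xtx + \beta(\xty+\ytx) + \gamma(\xty - \ytx) + \delta\, \yty$ (rewriting in a convenient basis) must be annihilated by $g - 1$ for the generators $g = \left(\begin{smallmatrix} 1 & 1 \\ 0 & 1 \end{smallmatrix}\right)$ and $g = \left(\begin{smallmatrix} 1 & 0 \\ 1 & 1 \end{smallmatrix}\right)$ of $G_2 = \mathrm{GL}_2(\F_2) \cong S_3$, and solving these linear equations over $\overline{\F}_2$ forces $\alpha = \beta = \delta = 0$. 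Since the trivial isotypic part of a semisimple module equals its fixed space, $\dim V^{G_2} = 1 < 2$ rules out semisimplicity.

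An alternative, perhaps cleaner, route to non-semisimplicity: exhibit an explicit quotient map $V \twoheadrightarrow \mathds{1}$ whose kernel contains $\mathrm{soc}(V) = \mathds{1} \oplus \mathrm{std}$ but is not a direct summand. Concretely, the multiplication map $P_1 : V_1 \otimes V_1 \to V_2$, $f_1 \otimes f_2 \mapsto f_1 f_2$ from the proof of Lemma~\ref{L2} is surjective with a $1$-dimensional kernel spanned by $\xty - \ytx$; this kernel is exactly the $\mathds{1}$ inside $\mathrm{soc}(V)$. If $V$ were semisimple this copy of $\mathds{1}$ would be a direct summand and $V_2 \cong V/\mathds{1}$ would also be a summand of $V$, hence semisimple as a $G_2$-module — but $V_2|_{G_2} = \mathrm{Sym}^2(\overline{\F}_2^2)$ is the well-known non-semisimple module with socle $\mathrm{std}$ (the span of $X^2, Y^2$ is not $G_2$-stable by itself but $XY$ maps onto the trivial quotient), giving the contradiction.

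The main obstacle is simply making one of these non-semisimplicity arguments airtight and choosing the one that fits the paper's elementary style; the fixed-space computation is the most self-contained, requiring only that one evaluate the action of the two unipotent generators of $\mathrm{GL}_2(\F_2)$ on a general vector and solve a small linear system — no character theory beyond what is already invoked. Once non-semisimplicity is in hand, everything else is forced: $\mathrm{soc}(V) = \mathds{1} \oplus \mathrm{std}$ by dimension count and the previous lemma, $V/\mathrm{soc}(V) \cong \mathds{1}$ by counting composition factors, and the two-step socle filtration $0 \subseteq \mathds{1} \oplus \mathrm{std} \subseteq V$ follows, completing the proof of the proposition.
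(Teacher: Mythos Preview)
Your primary approach---computing $\dim V^{G_2}=1$ to rule out semisimplicity, then concluding $\mathrm{soc}(V)=\mathds{1}\oplus\mathrm{std}$ by the dimension count---is correct and proves the proposition. It differs from the paper's argument, which instead shows directly that the short exact sequence $0\to W\to V\to V/W\to 0$ (with $W=\mathds{1}\oplus\mathrm{std}$) does not split: assuming a $G_2$-linear retraction $f':V\to W$ with $f'\circ f=\mathrm{Id}$, the paper applies the swap element $h=\left(\begin{smallmatrix}0&1\\1&0\end{smallmatrix}\right)$ to $f'(\xty)$ and deduces $f'(\xty-\ytx)\in\langle x-y\rangle$, contradicting $f'(\xty-\ytx)=1$. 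Your fixed-space computation is just as elementary and arguably more transparent, since it avoids naming a would-be splitting.

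Two genuine errors in your write-up should be fixed, however. First, the parenthetical claim that $\mathrm{rad}(V)=\mathrm{soc}(V)$ is false here: the multiplication map $P_1:V\twoheadrightarrow V_2$ has one-dimensional kernel $\langle\xty-\ytx\rangle$ and (see the next paragraph) semisimple image, so $\mathrm{rad}(V)=\ker P_1\cong\mathds{1}$ is one-dimensional, while $\mathrm{soc}(V)=\mathds{1}\oplus\mathrm{std}$ is three-dimensional. Self-duality only forces the socle and radical \emph{layers} to match in reversed order, not the submodules themselves. This slip is harmless for your main argument, which never uses $\mathrm{rad}(V)$, but the sentence should be deleted.

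Second, and more seriously, your ``alternative, perhaps cleaner'' route via $V_2$ collapses: contrary to your assertion, $V_2|_{G_2}=\mathrm{Sym}^2(\OFs)$ \emph{is} semisimple over $G_2$. One checks directly that $\langle X^2,Y^2\rangle\cong\mathrm{std}$ and $\langle X^2+XY+Y^2\rangle\cong\mathds{1}$ are complementary $G_2$-stable subspaces, so $V_2|_{G_2}\cong\mathds{1}\oplus\mathrm{std}$. Hence the semisimplicity of $V_2$ as a quotient of $V$ yields no contradiction, and this alternative should be dropped entirely in favour of the fixed-space computation.
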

	
	\begin{proof}
		Let $W=\mathds{1}\oplus \mathrm{std}.$ Consider the short exact sequence
		$$0\longrightarrow W\overset{f}{\longrightarrow}V\overset{}{\longrightarrow}V/W\longrightarrow 0$$ where $W=\langle\{1,x,y\}\rangle,$ such that $\mathds{1}=\langle \{1\}\rangle$ and $\mathrm{std}=\langle\{x,y\}\rangle.$ Thus, we must have that $1,x,y$ are linearly independent and that $f(1)=\xty-\ytx$;
		$f(x)= \xtx+\xty-\ytx$; and $f(y)=\yty+\xty-\ytx.$ We shall show that this short exact sequence is non-split.\\ Suppose if possible there exists a $G_2\text{-linear}$ map $f':V\to W$ such that $f'\circ f=$Id. 
		Then we must have $f'(\xty-\ytx)=1.$\\
        Let $f'(\xty)=\alpha 1+\beta x+\gamma y$ for some scalars $\alpha,\beta,\gamma.$ Then, for $h=\left(\begin{matrix}
			0 & 1 \\
			1 & 0
		\end{matrix}\right),$ $f'(\ytx)=f'(h\cdot(\xty))=h\cdot f'(\xty)=h\cdot(\alpha 1+\beta x+\gamma y) = \alpha 1+\beta y+\gamma x.$ Also, $1=f'(\xty-\ytx)=(\alpha 1+\beta x+\gamma y)-(\alpha 1+\beta y+\gamma x)=(\beta-\gamma)(x-y),$ which contradicts that $1,x,y$ are linearly independent. Thus, there is no image of $\xty$ and $\ytx$ under any $G_2\text{-linear}$ map from $V$ to $W.$ Hence the above sequence is non-split. This proves that the socle of $V$ is $W.$ Since $V/W$ is $1$ dimensional, it is semisimple and thus we get the required socle filtration of $V.$
	\end{proof}
	
	\subsection{$V_{\vec{r}}(k)$ distinguished by characters of $\Gp$}\label{SS2}
	\begin{definition}
		Let $G$ and $H$ be groups with $H\leq G.$ Then $(G,H)$ is called a Gelfand pair if $\mathrm{dim}_{\OF}(\mathrm{Hom}_H(\pi,\mathds{1}))\leq 1,$ for all irreducible $\OF\text{-representations }\pi$ of $G.$
	\end{definition}

    \begin{definition}\label{Def}
		Let $G$ and $H$ be groups with $H\leq G$ and $\pi$ be a representation of $G.$ For a character $\chi$ of $H,$ we say that $\pi$ is $(H,\chi)\text{-distinguished}$ (or simply $H\text{-distinguished}$ if $\chi=\mathds{1}$) if $\mathrm{Hom}_H(\pi,\chi)\neq 0.$
	\end{definition}
	
	Let $0\leq r_1,r_2<p$ and $r_1+r_2<p.$ The author in \cite{RZ} has proved that $(\Gq, \Gp)$ is a Gelfand pair and so the dimension of $\text{Hom}_G(\pi,\mathds{1})$ is at most 1 for all irreducible representations $\pi$ of $\Gq.$ We explicitly prove when it is $0$ and $1$ for the irreducible representations $V_{\vec{r}}(k)$ of $\Gq$ with $r_1,r_2$ satisfying the above conditions. 
    
    \begin{proposition}
      Let $0\leq r_1,r_2<p$ and $r_1+r_2<p.$ Then $V_{\vec{r}}(k)$ is $\Gp\text{-distinguished}$ if and only if $r_1=r_2$ and $r_2\equiv -k(\text{mod } p-1).$
    \end{proposition}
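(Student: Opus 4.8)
The plan is to read the answer off the explicit decomposition in Remark \ref{R3}. Since the restriction ${V_{\vec r}(k)|}_{\Gp}$ depends only on the multiset $\{r_1,r_2\}$, we may assume $r_2\le r_1$; then $r_1+r_2<p$ forces $r_1<p-r_2$, so Theorem \ref{T2} and Remark \ref{R3} give
$${V_{\vec r}(k)|}_{\Gp}\cong V_{r_1+r_2}(k)\oplus V_{r_1+r_2-2}(1+k)\oplus\cdots\oplus V_{r_1-r_2}(r_2+k).$$
Because $r_1+r_2<p$, every summand $V_{r_1+r_2-2i}(i+k)$ with $0\le i\le r_2$ is an irreducible representation of $\Gp$, so the restriction is semisimple. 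Hence $V_{\vec r}(k)$ is $\Gp\text{-distinguished}$, i.e. $\mathrm{Hom}_{\Gp}(V_{\vec r}(k),\mathds 1)\neq 0$, if and only if one of these summands is isomorphic to the trivial representation $\mathds 1$.

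Next I would record when a representation $V_s(j)$ with $0\le s<p$ is the trivial $\Gp$-representation. Since $\dim V_s(j)=s+1$, this forces $s=0$, and then $V_0(j)$ is the one-dimensional space on which $\Gp$ acts through $\det^{j}$. As the image of $\det\colon\Gp\to\F_p^\times$ is cyclic of order $p-1$, this character is trivial precisely when $p-1\mid j$. (One should note that, upon restriction from $\Gq$ to $\Gp$, the relevant modulus for the determinant twist is $p-1$ rather than $q-1$.)

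Combining the two steps, $\mathds 1$ appears in ${V_{\vec r}(k)|}_{\Gp}$ if and only if there is an index $i$ with $0\le i\le r_2$ such that $r_1+r_2-2i=0$ and $i+k\equiv 0\pmod{p-1}$. From $r_1+r_2=2i$ and $i\le r_2\le r_1$ we get $r_1+r_2=2i\le 2r_2$, hence $r_1\le r_2$ and therefore $r_1=r_2$, with $i=r_2$; conversely, if $r_1=r_2$ then $i=r_2$ is admissible and $r_1+r_2-2i=0$. Thus $V_{\vec r}(k)$ is $\Gp\text{-distinguished}$ if and only if $r_1=r_2$ and $r_2+k\equiv 0\pmod{p-1}$, that is, $r_2\equiv -k\pmod{p-1}$.

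I do not anticipate a genuine obstacle here: once Remark \ref{R3} is in hand the proof is pure bookkeeping, the only points requiring a little care being the use of semisimplicity of the restriction to replace ``$\mathds 1$ occurs in the cosocle'' by ``$\mathds 1$ occurs as a constituent,'' and the reduction of the determinant exponent modulo $p-1$ after restriction to $\Gp$.
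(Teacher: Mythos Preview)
Your proof is correct and follows essentially the same approach as the paper: apply the semisimple decomposition of Remark~\ref{R3} and read off when the trivial character occurs as a summand. Your argument is in fact more carefully spelled out than the paper's, in particular your explicit justification that semisimplicity reduces distinction to the occurrence of $\mathds{1}$ as a constituent and that the determinant exponent is to be taken modulo $p-1$ after restriction.
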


    \begin{proof}
         Without loss of generality, assume that $r_1\geq r_2.$ By the (semisimple) decomposition in Remark \ref{R3}, an irreducible representation $V_{\vec{r}}(k)$ of $\Gq$ restricted to $\Gp$ contains a character if and only if $r_1=r_2.$ In particular, it contains the trivial character with multiplicity one if $r_2\equiv -k(\text{mod } p-1).$ Hence,
	$$\mathrm{dim}_{\OF}(\mathrm{Hom}_{G_p}(V_{\vec{r}}(k), \mathds{1}))
	=\left\{\begin{array}{ll}
		1, & \text{ if }r_1=r_2\text{ and }r_2\equiv -k(\text{mod } p-1),\\
		0, & \text{ otherwise.}
	\end{array}\right.$$
    \end{proof}

\begin{proposition}
    Let $\vec{r}=(r,r)$ and $r,k\geq 0.$ Then $V_{\vec{r}}(k)$ is $(\Gp,\mathrm{det}^{r+k})\text{-distinguished}.$
	
\end{proposition}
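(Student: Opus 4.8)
Here is the plan I would follow.

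\medskip

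\noindent The plan is to reduce the statement to a question about the $\Gp$-module $V_r\otimes V_r$ and then settle that by a central-character argument. On restriction to $\Gp$ the Frobenius twist on the second tensor factor disappears, so $V_{\vec r}(k)|_{\Gp}\cong V_r\otimes V_r\otimes\mathrm{det}^k$ with $V_r=\mathrm{Sym}^r(\OFs)$; and since $\mathrm{det}^{p-1}=\mathds 1$ on $\Gp$, being $(\Gp,\mathrm{det}^{r+k})$-distinguished is the same as $\mathrm{Hom}_{\Gp}(V_r\otimes V_r,\mathrm{det}^r)\neq 0$, i.e.\ by tensor--hom adjunction the existence of a nonzero $\Gp$-map $V_r\to V_r^{\ast}\otimes\mathrm{det}^r$. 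I would first dispose of the (easy) socle half: the bihomogeneous polynomial $(x_1y_2-x_2y_1)^r$, of bidegree $(r,r)$, is nonzero (its $x_1^ry_2^r$-coefficient is $1$) and $\Gp$ acts on it through $\mathrm{det}^r$, because $x_1y_2-x_2y_1$ transforms by $\mathrm{det}$; hence its span is a copy of $\mathrm{det}^r$ inside $V_r\otimes V_r$.

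\medskip

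\noindent For the cosocle half --- which is what gives the distinction --- the plan is to exhibit a simple $\Gp$-module $S$ lying both in the cosocle of $V_r$ and in the socle of $V_r^{\ast}\otimes\mathrm{det}^r$, and then take the composite $V_r\twoheadrightarrow S\hookrightarrow V_r^{\ast}\otimes\mathrm{det}^r$. The key observation is a central-character computation: a scalar matrix $\lambda I\in\Gp$ acts on $V_r=\mathrm{Sym}^r(\OFs)$ by $\lambda^r$, hence by $\lambda^r$ on every composition factor of $V_r$; since the irreducible $\Gp$-modules are the $V_a(b)=\mathrm{Sym}^a(\OFs)\otimes\mathrm{det}^b$ with $0\le a<p$ and $0\le b<p-1$ (on which $\lambda I$ acts by $\lambda^{a+2b}$), it follows that every $V_a(b)$ occurring in $V_r$ satisfies $a+2b\equiv r\pmod{p-1}$. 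Using the standard self-duality $\big(\mathrm{Sym}^a(\OFs)\big)^{\ast}\cong\mathrm{Sym}^a(\OFs)\otimes\mathrm{det}^{-a}$ for $a<p$ (which comes from $(\OFs)^{\ast}\cong\OFs\otimes\mathrm{det}^{-1}$), one gets $V_a(b)^{\ast}\otimes\mathrm{det}^r=V_a(r-a-b)=V_a(b)$, the last equality because $r-a-b\equiv b\pmod{p-1}$ and the $\mathrm{det}$-exponent matters only modulo $p-1$. Thus every simple summand of the (nonzero) cosocle of $V_r$ is fixed by $M\mapsto M^{\ast}\otimes\mathrm{det}^r$; since duality interchanges socle and cosocle and commutes with twisting by a character, $\mathrm{soc}(V_r^{\ast}\otimes\mathrm{det}^r)=\big(\mathrm{cosoc}\,V_r\big)^{\ast}\otimes\mathrm{det}^r\cong\mathrm{cosoc}\,V_r$, so any simple $S$ in $\mathrm{cosoc}\,V_r$ lies in both. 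This produces the required map, hence the distinction, and the same bookkeeping shows $\mathrm{det}^{r+k}$ occurs in both the socle and the cosocle of $V_{\vec r}(k)|_{\Gp}$, as announced in the introduction.

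\medskip

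\noindent The main obstacle to be aware of is that, for $r\ge p$, one should not expect to write down an explicit $\Gp$-equivariant pairing $V_r\otimes V_r\to\mathrm{det}^r$ directly: the natural invariant-theoretic candidates --- Cayley's $\Omega$-process (the $r$-th transvectant) and the apolarity pairing $(P,Q)\mapsto P(-\partial_y,\partial_x)Q$ --- are assembled from the factorials $r!,\,a!,\,b!$ and vanish identically modulo $p$ once $r$ is large (the $\Omega$-process already for $r\ge p$). So the argument has to be structural, and the point that makes it go through is precisely the congruence $a+2b\equiv r\pmod{p-1}$ for the constituents of $V_r$, which forces each of them to be self-dual up to twist by $\mathrm{det}^r$ and thereby makes the cosocle of $V_r$ meet the socle of $V_r^{\ast}\otimes\mathrm{det}^r$.
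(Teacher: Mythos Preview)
Your proof is correct. Both you and the paper handle the socle identically, via the explicit invariant $(x_1y_2-x_2y_1)^r$. For the cosocle the paper takes a shorter route: it simply dualizes the inclusion $\det^r\hookrightarrow V_r\otimes V_r$ and then invokes the self-duality $V_r^{\ast}\cong V_r(-r)$ of the irreducible $V_r$ (for $0\le r<p$) to identify $(V_r\otimes V_r)^{\ast}\cong V_r\otimes V_r\otimes\det^{-2r}$, which immediately yields $V_r\otimes V_r\twoheadrightarrow\det^r$. Your route through tensor--hom adjunction and the central-character congruence $a+2b\equiv r\pmod{p-1}$ is more elaborate but genuinely structural: it only needs the \emph{composition factors} of $V_r$ to be self-dual up to twist by $\det^r$, not $V_r$ itself, and so it goes through uniformly for all $r\ge 0$, including the range $r\ge p$ where $\mathrm{Sym}^r$ is reducible. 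In the range $0\le r<p$ relevant to irreducible $V_{\vec r}(k)$ the two arguments collapse to the same fact $V_r^{\ast}\otimes\det^r\cong V_r$; the paper's phrasing is the more economical one there, while yours is what one would want if the statement were to be pushed beyond $r<p$.
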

	
	\begin{proof}
	    We show the existence of a character of $\Gp$ in the socle (and consequently a character in the cosocle) of $V_{r}\otimes V_{r}\otimes\mathrm{det}^k$, for all $r,k\geq 0.$ To prove this, consider the product defined on the tensor product of polynomial algebra with homogeneous polynomials of varying degrees as its graded pieces \cite{SL}. For $f_1,f_2\in V_r$ and $g_1,g_2\in V_s,$ we have $$(f_1\otimes f_2)\cdot(g_1\otimes g_2)=(f_1g_1\otimes f_2g_2)\in V_{r+s}\otimes V_{r+s}.$$
	One can check that the product is well defined and hence can be extended linearly to the whole spaces $V_r\otimes V_r$ and $V_s\otimes V_s$ respectively through distributivity of this product over sums. Also, note that the product is commutative as the algebra of homogeneous polynomials is.
	
	Let $g=\left(\begin{matrix}
			a & b \\
			c & d
		\end{matrix}\right)\in \Gp$ and $\xty-\ytx\in V_1\otimes V_1.$ Then, one can easily check that $g\cdot(\xty-\ytx)=\mathrm{det}(g)(\xty-\ytx).$ So,	
		\begin{align*}		    			
			g\cdot(\xty-\ytx)^r
			&= \displaystyle{g\cdot\left(\sum\limits_{i=0}^r (-1)^i\binom{r}{i}(\xty)^{r-i}(\ytx)^i\right)} \\
			&=\displaystyle{g\cdot\left(\sum\limits_{i=0}^r (-1)^i\binom{r}{i}(X^{r-i}Y^i\otimes Y^{r-i}X^i)\right)}\\
			&= \displaystyle{\sum\limits_{i=0}^r (-1)^i\binom{r}{i}(aX+cY)^{r-i}(bX+dY)^i\otimes(bX+dY)^{r-i}(aX+cY)^i}\\
			&= \displaystyle{\sum\limits_{i=0}^r (-1)^i\binom{r}{i}((aX+cY)\otimes(bX+dY))^{r-i}((bX+dY)\otimes(aX+cY))^i}\\
			&= \displaystyle{[((aX+cY)\otimes(bX+dY))-((bX+dY)\otimes(aX+cY))]^r}\\
			&= [g\cdot (\xty-\ytx)]^r\\
			&= \mathrm{det}(g)^r(\xty-\ytx)^r.		
		\end{align*}

Thus, $\mathrm{det}^r$ is a subrepresentation of $V_r\otimes V_r$, and subsequently, $\mathrm{det}^{r+k}$ is that of $V_r\otimes V_r\otimes\mathrm{det}^k.$ Since taking contragredient is a contravariant functor, we have $$\left(V_r\otimes V_r\otimes\mathrm{det}^k\right)^*\twoheadrightarrow \left(\mathrm{det}^{r+k}\right)^*.$$
	Now, the dual of tensor product of two representations is isomorphic to the tensor product of their duals. Also, it is well known that $V_r(k)^*\cong V_r(-r-k).$ Thus, we have $V_r\otimes V_r\otimes\mathrm{det}^{-2r-k}\twoheadrightarrow \mathrm{det}^{-r-k}$ and so $V_r\otimes V_r\otimes \mathrm{det}^{k}\twoheadrightarrow \mathrm{det}^{r+k}.$ This proves that
    $$\mathrm{Hom}_{\Gp}(V_{\vec{r}}(k), \mathrm{det}^{r+k})\neq 0. $$
    \end{proof}
	
\section{Restriction when $\F_q$ is an extension of $\F_p$ of degree $3$}\label{S4}
	For this section, let $\F_q$ be an extension of $\F_p$ of degree $3.$ Here, the restriction of $V_{\vec{r}}(k)$ to $\Gp$  is equivalent to studying the representation $V_{r_1}\otimes V_{r_2}\otimes V_{r_3}\otimes\mathrm{det}^k$ of $\Gp.$ Again, we assume $k=0$ at first and similar to the quadratic case we will consider without loss of generality $0\leq r_3\leq r_2\leq r_1<p.$	To compactly state the results, we shall denote $\displaystyle{\bigoplus\limits_{i=0}^{s}a_iV_{f(i)}(i)\oplus\bigoplus\limits_{i=s+1}^{t}b_i}V_{f(i)}(i)$ by $\displaystyle{\left[\bigoplus\limits_{i=0}^{s}a_i\oplus\bigoplus\limits_{i=s+1}^{t}b_i\right]}V_{f(i)}(i),$ where $f(i)$ is some function of $i.$
	
	\begin{theorem}\label{T3}
		Let $\vec{r}=(r_1,r_2,r_3)$ such that $ 0\leq r_3\leq r_2\leq r_1< p-r_2-r_3.$ Then
		\begin{enumerate}
			\item if $r_1\geq r_2+r_3,$
			\begin{equation*}
			    {V_{\vec{r}}|}_{\Gp}\cong            	\displaystyle{\left[\bigoplus\limits_{i=0}^{r_3}(i+1)\oplus\bigoplus\limits_{i=r_3+1}^{r_2}(r_3+1)\oplus \bigoplus\limits_{i=r_2+1}^{r_2+r_3}(r_3+(r_2+1)-i)\right]V_{r_1+r_2+r_3-2i}(i),}
			\end{equation*}				
			
			\item if $r_1< r_2+r_3,$
			\begin{multline*}
			    {V_{\vec{r}}|}_{\Gp}\cong 
				\left[\displaystyle{\bigoplus\limits_{i=0}^{r_3}(i+1)\oplus\bigoplus\limits_{i=r_3+1}^{r_2}(r_3+1)\oplus\bigoplus\limits_{i=r_2+1}^{r_1}(r_3+(r_2+1)-i)\oplus}\right.\\
\left.\displaystyle{\bigoplus\limits_{i=r_1+1}^{\left\lfloor\frac{r_1+r_2+r_3}{2}\right\rfloor}(r_1+r_2+r_3-2i+1)}\right] V_{r_1+r_2+r_3-2i}(i),
			\end{multline*}
where $\lfloor \cdot\rfloor$ denotes the floor function.
			
		\end{enumerate}
	\end{theorem}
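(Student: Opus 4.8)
The plan is to deduce Theorem~\ref{T3} from two successive applications of Theorem~\ref{T2} together with the distributivity of $\otimes$ over $\oplus$, and then to reduce the statement to an elementary count of lattice points. Note first that the hypothesis $r_1<p-r_2-r_3$ gives $r_1+r_2+r_3<p$, and in particular $r_2\le r_1<p-r_2$, so Theorem~\ref{T2} applies to $V_{r_1}\otimes V_{r_2}$ and, using Remark~\ref{R2} to handle any vanishing $r_i$,
$$
{V_{\vec{r}}|}_{\Gp}\;\cong\;(V_{r_1}\otimes V_{r_2})\otimes V_{r_3}\;\cong\;\bigoplus\limits_{j=0}^{r_2}\bigl(V_{r_1+r_2-2j}\otimes V_{r_3}\bigr)(j).
$$
For each $0\le j\le r_2$ one has $0\le r_1+r_2-2j$ (since $j\le r_2\le r_1$) and $(r_1+r_2-2j)+r_3\le r_1+r_2+r_3<p$, so—after reordering the two tensor factors by Remark~\ref{R1} when $r_1+r_2-2j<r_3$—Theorem~\ref{T2} applies again and gives
$$
V_{r_1+r_2-2j}\otimes V_{r_3}\;\cong\;\bigoplus\limits_{l=0}^{\min\{r_1+r_2-2j,\,r_3\}}V_{r_1+r_2+r_3-2j-2l}(l).
$$
Substituting, twisting, and regrouping by the total twist $i=j+l$ yields ${V_{\vec{r}}|}_{\Gp}\cong\bigoplus_i N_i\,V_{r_1+r_2+r_3-2i}(i)$, where $N_i$ is the number of pairs $(j,l)$ of non-negative integers with $j\le r_2$, $l\le r_3$, $l\le r_1+r_2-2j$ and $j+l=i$.

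The heart of the argument is the evaluation of $N_i$. Writing $l=i-j$, the four constraints translate into $\max\{0,\,i-r_3\}\le j\le\min\{r_2,\,i,\,r_1+r_2-i\}$, so $N_i=\min\{r_2,i,r_1+r_2-i\}-\max\{0,i-r_3\}+1$ whenever this is positive and $N_i=0$ otherwise. I would then split into the two cases of the statement. If $r_1\ge r_2+r_3$, then $r_1+r_2-2j\ge r_1-r_2\ge r_3$ for every admissible $j$, so the constraint $l\le r_1+r_2-2j$ is vacuous and the upper bound reduces to $\min\{r_2,i\}$; a direct computation then gives the three-step ``trapezoidal'' multiplicities $i+1$ on $0\le i\le r_3$, $r_3+1$ on $r_3+1\le i\le r_2$, and $r_3+(r_2+1)-i$ on $r_2+1\le i\le r_2+r_3$, which is part~(1). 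If $r_1<r_2+r_3$, then $r_1+r_2-i$ becomes the effective minimum once $i>r_1$; tracking which of the three terms realizes $\min\{r_2,i,r_1+r_2-i\}$ on each range gives the same first three blocks but with the third truncated at $i=r_1$, together with the additional block $r_1+r_2+r_3-2i+1$ on $r_1+1\le i\le\lfloor(r_1+r_2+r_3)/2\rfloor$, and one verifies that $N_i$ first vanishes exactly at $i=\lfloor(r_1+r_2+r_3)/2\rfloor+1$. This is part~(2).

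Finally I would record the housekeeping: every exponent $r_1+r_2+r_3-2i$ lies in $\{0,1,\dots,p-1\}$ because $r_1+r_2+r_3<p$ and $i\ge0$, hence each $V_{r_1+r_2+r_3-2i}(i)$ is an irreducible $\Gp$-module, and since these exponents strictly decrease in $i$ the summands are pairwise non-isomorphic; in particular ${V_{\vec{r}}|}_{\Gp}$ is completely reducible. Twisting throughout by $\mathrm{det}^k$ as in Remark~\ref{R3} then yields the statement for general $k$, and the $\mathrm{SL}_2$ corollary follows as in the quadratic case. The main obstacle is the bookkeeping in the second case: one must locate the breakpoints of $\min\{r_2,i,r_1+r_2-i\}$ and of $\max\{0,i-r_3\}$ precisely and check that the resulting intervals of admissible $j$ are non-empty exactly up to $i=\lfloor(r_1+r_2+r_3)/2\rfloor$; the first case and the reduction to the lattice-point count are routine extensions of the argument used for $\F_{p^2}$.
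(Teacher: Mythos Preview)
Your argument is correct, but it is organized differently from the paper's. The paper proceeds by induction on $r_3$: it first applies Theorem~\ref{T2} to $V_{r_2}\otimes V_{r_3}$, then peels off the top summand $V_{r_1}\otimes V_{r_2+r_3}$ and recognizes the remainder as $V_{r_1}\otimes(V_{r_2-1}\otimes V_{r_3-1})\otimes\det$, to which the induction hypothesis applies. You instead apply Theorem~\ref{T2} first to $V_{r_1}\otimes V_{r_2}$ and then directly to each resulting $V_{r_1+r_2-2j}\otimes V_{r_3}$, reducing everything in one stroke to the lattice-point count $N_i=\#\{(j,l):0\le j\le r_2,\ 0\le l\le\min\{r_3,r_1+r_2-2j\},\ j+l=i\}$. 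Your route avoids induction entirely and makes the origin of the multiplicities completely transparent as a closed-form $\min/\max$ computation; the paper's route packages the multiplicities via the hypothesis and so trades the explicit count for a shorter inductive step. Both rest on the same two ingredients (Theorem~\ref{T2} and distributivity), and your verification of the breakpoints in case~(2)---in particular that $r_1+r_2-i$ becomes the effective minimum precisely for $i>r_1$ and that $N_i$ first vanishes at $i=\lfloor(r_1+r_2+r_3)/2\rfloor+1$---is exactly what is needed.
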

	
	\begin{proof}
		We will prove this theorem by induction. Note that for $r_3=0$, $V_{r_1}\otimes V_{r_2}\otimes V_0\cong V_{r_1}\otimes V_{r_2}$ and so by Theorem \ref{T2}, the result is true in this case. With the induction hypothesis that the theorem holds for $r_3-1\leq r_2-1\leq r_1$, we will show that it holds for $r_3\leq r_2\leq r_1$. (So to decompose $V_{r_1}\otimes V_{r_2}\otimes V_{r_3},$ we can apply induction $r_3$ times on the initial case $V_{r_1}\otimes V_{r_2-r_3}\otimes V_{0}.$) Note that,
		\begin{align*}
			V_{r_1}\otimes V_{r_2}\otimes V_{r_3}&\cong V_{r_1}\otimes \left(\displaystyle{\bigoplus\limits_{i=0}^{r_3}V_{r_2+r_3-2i}(i)}\right)~~~~~(\text{By Theorem }\ref{T2})\\
			&\cong (V_{r_1}\otimes V_{r_2+r_3})\oplus (V_{r_1}\otimes (V_{r_2-1}\otimes V_{r_3-1})\otimes \mathrm{det}).
		\end{align*}\\
        
		Using induction hypothesis and Theorem \ref{T2},
		\begin{enumerate}
			\item when $r_1\geq r_2+r_3,$            
			\begin{multline*}			    
			V_{r_1}\otimes V_{r_2}\otimes V_{r_3}\cong \displaystyle{\bigoplus\limits_{i=0}^{r_2+r_3}V_{r_1+r_2+r_3-2i}(i)\oplus}\left[\left(\displaystyle{\bigoplus\limits_{i=0}^{r_3-1}(i+1)\oplus \bigoplus\limits_{i=r_3}^{r_2-1}(r_3)\oplus}\right.\right.\\
\left.\left.\displaystyle{\bigoplus\limits_{i=r_2}^{r_2+r_3-2}(r_3-1+(r_2)-i)}\right)V_{r_1+r_2-1+r_3-1-2i}(i)\otimes \mathrm{det}\right]		
            \end{multline*}
			$\displaystyle{\cong\bigoplus\limits_{i=0}^{r_2+r_3}V_{r_1+r_2+r_3-2i}(i)\oplus\left[\left(\bigoplus\limits_{i=1}^{r_3}i\oplus \bigoplus\limits_{i=r_3+1}^{r_2}r_3\oplus \bigoplus\limits_{i=r_2+1}^{r_2+r_3-1}(r_3+r_2-i)\right)V_{r_1+r_2+r_3-2i}(i)\right]}$\\			
$\displaystyle{\cong \left(\bigoplus\limits_{i=0}^{r_3}(i+1)\right.}\oplus\displaystyle{\left.\bigoplus\limits_{i=r_3+1}^{r_2}(r_3+1)\oplus\bigoplus\limits_{i=r_2+1}^{r_2+r_3}(r_3+(r_2+1)-i)\right)}V_{r_1+r_2+r_3-2i}(i),$\\

			\item when $r_1<r_2+r_3,$
			\begin{multline*}
			    V_{r_1}\otimes V_{r_2}\otimes V_{r_3}\cong \displaystyle{\bigoplus\limits_{i=0}^{r_1}V_{r_1+r_2+r_3-2i}(i)\oplus}\left[\mathrm{det}\otimes\left(\displaystyle{\bigoplus\limits_{i=0}^{r_3-1}(i+1)\oplus \bigoplus\limits_{i=r_3}^{r_2-1}r_3\oplus\bigoplus\limits_{i=r_2}^{r_1}(r_3-1+(r_2)-i)\oplus}\right.\right.\\		
\displaystyle{\left.\left.\bigoplus\limits_{i=r_1+1}^{\left\lfloor\frac{r_1+r_2+r_3-2}{2}\right\rfloor}(r_1+r_2+r_3-2i-1)\right)V_{r_1+r_2+r_3-2i-2}(i)\right]}			\end{multline*}
            \begin{multline*}
			    \displaystyle{\cong \bigoplus\limits_{i=0}^{r_1}V_{r_1+r_2+r_3-2i}(i)\oplus \left[\left(\bigoplus\limits_{i=1}^{r_3}i\oplus \bigoplus\limits_{i=r_3+1}^{r_2}r_3\oplus\bigoplus\limits_{i=r_2+1}^{r_1+1}(r_3+r_2-i)\oplus\right.\right.}\\
\displaystyle{\left.\left.\bigoplus\limits_{i=r_1+2}^{\left\lfloor\frac{r_1+r_2+r_3}{2}\right\rfloor}(r_1+r_2+r_3-2i+1) \right)V_{r_1+r_2+r_3-2i}(i)\right]}            
			\end{multline*}
            \begin{equation*}
        \displaystyle{\cong\left(\bigoplus\limits_{i=0}^{r_3}(i+1)\oplus\bigoplus\limits_{i=r_3+1}^{r_2}(r_3+1)\oplus\bigoplus\limits_{i=r_2+1}^{r_1}(r_3+(r_2+1)-i)\oplus\bigoplus\limits_{i=r_1+1}^{\left\lfloor\frac{r_1+r_2+r_3}{2}\right\rfloor}(r_1+r_2+r_3-2i+1)\right)}V_{r_1+r_2+r_3-2i}(i).
            \end{equation*}
		\end{enumerate}
		This proves the theorem.
	\end{proof}
	
	\begin{remark}
		With the notations used above, for $k\in \{0,1,\cdots, q-2\}$ and $0\leq r_3\leq r_2\leq r_1<p-r_2-r_3,$
		$${V_{\vec{r}}(k)|}_{\Gp}\cong ~\bigoplus\limits_{i=0}^{r_2+r_3}\alpha_iV_{r_1+r_2+r_3-2i}((i+k))$$ where $\alpha_i$ is the multiplicity of $V_{r_1+r_2+r_3-2i}(i)$ in the decomposition of Theorem \ref{T3}.
	\end{remark}
	
	\begin{corollary}
		Let $0\leq r_3\leq r_2\leq r_1<p-r_2-r_3$ and $W_i$ denote $V_{r_1+r_2+r_3-2i}.$ Then we have the following $\mathrm{SL}_2(\F_p)\text{-isomorphisms}.$
\begin{enumerate}
			\item If $r_1\geq r_2+r_3,$ then
			\begin{equation*}
			    \displaystyle{\left(V_{r_1}\otimes V_{r_2}\otimes V_{r_3}\right)\cong
				\left[\bigoplus\limits_{i=0}^{r_3}(i+1)\oplus\bigoplus\limits_{i=r_3+1}^{r_2}(r_3+1)\oplus\bigoplus\limits_{i=r_2+1}^{r_2+r_3}(r_3+(r_2+1)-i)\right]W_i.}\end{equation*}

			\item If $r_1< r_2+r_3,$ then			
            \begin{multline*}
            \left(V_{r_1}\otimes V_{r_2}\otimes V_{r_3}\right)\cong
			\displaystyle{\left[\bigoplus\limits_{i=0}^{r_3}(i+1)\oplus\bigoplus\limits_{i=r_3+1}^{r_2}(r_3+1)\oplus\bigoplus\limits_{i=r_2+1}^{r_1}(r_3+(r_2+1)-i)\oplus\right.}\\
\displaystyle{\left.\bigoplus\limits_{i=r_1+1}^{\left\lfloor\frac{r_1+r_2+r_3}{2}\right\rfloor}(r_1+r_2+r_3-2i+1)\right]W_i.}
			    \end{multline*}
			
		\end{enumerate}
	\end{corollary}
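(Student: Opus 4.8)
The plan is to mirror the argument used for the analogous $\mathrm{SL}_2$-corollary in Section \ref{S3}. First I would recall that the irreducible mod $p$ representations of $\mathrm{SL}_2(\F_q)$ are, up to isomorphism, precisely the spaces $V_{r_1}\otimes V_{r_2}\otimes V_{r_3}$ with $r_1,r_2,r_3\in\{0,1,\dots,p-1\}$, carrying the same Frobenius-twisted action as the representations $V_{\vec{r}}$ of $\Gq$; the determinant twist that separates the irreducibles of $\Gq$ plays no role here, since $\mathrm{det}\equiv 1$ on $\mathrm{SL}_2$. Thus restricting such a representation to $\mathrm{SL}_2(\F_p)$ is the same as restricting $V_{\vec{r}}$, regarded as a $\Gp$-representation, further to the subgroup $\mathrm{SL}_2(\F_p)$.

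Next I would invoke Theorem \ref{T3}: under the hypothesis $0\le r_3\le r_2\le r_1<p-r_2-r_3$, it provides an explicit $\Gp$-isomorphism of $V_{\vec{r}}$ with a direct sum of twisted symmetric powers $V_{r_1+r_2+r_3-2i}(i)$, with multiplicities spelled out in its two cases. Any $\Gp$-isomorphism is in particular an $\mathrm{SL}_2(\F_p)$-isomorphism after restriction to that subgroup. Since $\mathrm{det}^i\equiv 1$ on $\mathrm{SL}_2(\F_p)$, each summand $V_{r_1+r_2+r_3-2i}(i)$ restricts to $V_{r_1+r_2+r_3-2i}=W_i$. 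Substituting this into the two cases of Theorem \ref{T3}, leaving the multiplicities untouched, gives exactly the two displayed decompositions.

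The only point worth a comment is that, after forgetting the twists, summands $W_i$ and $W_j$ with $i\neq j$ remain non-isomorphic as $\mathrm{SL}_2(\F_p)$-representations because $r_1+r_2+r_3-2i\neq r_1+r_2+r_3-2j$; hence no collecting of coincident terms takes place and the multiplicities are literally those of Theorem \ref{T3}. There is essentially no obstacle in this argument: all the substance is carried by Theorem \ref{T3}, and the corollary is just the observation that its statement descends along the inclusion $\mathrm{SL}_2(\F_p)\hookrightarrow\Gp$. I would finish by remarking that the hypothesis forces $r_1+r_2+r_3<p$, so each $W_i$ is irreducible and the restriction is semisimple, in accordance with Serre's criterion recalled in the introduction.
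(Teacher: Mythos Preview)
Your proposal is correct and follows exactly the paper's approach: invoke Theorem~\ref{T3} to get a $\Gp$-isomorphism, restrict it to $\mathrm{SL}_2(\F_p)$, and observe that the determinant twists become trivial so that each $V_{r_1+r_2+r_3-2i}(i)$ reduces to $W_i$. Your additional remarks about the irreducibility of the $W_i$ and the non-collapsing of multiplicities are valid and make the argument slightly more explicit, but they do not constitute a different route.
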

	
	\begin{proof}
		The proof follows from Theorem \ref{T3}: the $\Gp\text{-isomorphism}$ in the theorem implies $\text{SL}_2(\F_p)\text{-isomorphism}$ and the twists by determinant powers are trivial.
	\end{proof}
	
	\section{Conclusion}
	The restriction of irreducible representations of $\Gq$ to $\Gp$ translated into finding the decomposition (in a broad sense) of tensor product of irreducible representations of $\Gp.$ Evidently, the decomposition is only dependent on the tuple $\Vec{r}$ and the prime $p.$ When $p=2,$ we have a complete description of the restriction in the case $q=p^2.$ Also, we have decomposed the irreducible representations of $\Gq$ for all $(r_1,r_2,r_3)$ such that $r_1+r_2+r_3<p,$ when $q=p^3.$ The decomposition of all the other cases that is, $p\leq r_1+r_2\leq 2(p-1)$ when $q=p^2$ and $p\leq r_1+r_2+r_3\leq 3(p-1)$ when $q=p^3$ still remain unanswered. More generally, when $\F_q$ is an extension of $\F_p$ of degree $n,$ we can inductively use the decomposition of tensor product of two irreducible representations and the distributivity of tensor product to conclude the decomposition for appropriate ranges of $r_i$'s in $\Vec{r}.$ But the complicated calculations in the case $q=p^3$ were evidence of the fact that simplification of the decomposition will not be that straightforward.
	
	There was an interesting development along the same lines. When $\Vec{r}=(r,r),$ we have at least one decomposition of $V_r\otimes V_r$ namely, $\text{Sym}^2(V_r)\oplus \text{Alt}^2(V_r).$ The $1\text{-dimensional}$ subrepresentation computed in Subsection \ref{SS2} is in fact a subrepresentation of $\text{Sym}^2(V_r)$ when $r$ is even and that of $\text{Alt}^2(V_r)$ when $r$ is odd. Through explicit computations for initial cases, we conjecture that the following should hold for $0\leq r<p-1,$
	$$\text{Sym}^2(V_r)\otimes\mathrm{det}\cong \text{Alt}^2(V_{r+1}).$$
	Obviously, the dimension of both the representations is $\frac{(r+1)(r+2)}{2}.$ Further, the ordinary characters of both the representations coincide but we can not say for a fact if they will be isomorphic, even up to semisimplification.\\
    
 \vspace{5mm}
        {\noindent \bf Acknowledgements:} The author is grateful to Prof. Gautam Borisagar for suggesting the question and numerous helpful discussions. The author is also thankful to Prof. Dipendra Prasad for his fruitful suggestions during the preparation of this article. The author also acknowledges the Government of Gujarat for the financial support through SHODH (ScHeme Of Developing High Quality Research) fellowship.

\end{document}